\newcommand{\numberseries}{\mdseries}   
\newlength{\thmtopspace}                
\newlength{\thmbotspace}                
\newlength{\thmheadspace}               
\newlength{\thmindent}                  
\renewcommand{\subparagraph}{\vspace{\thmbotspace}}
\newtheoremstyle{bfupright head,slanted body}
                {\thmtopspace}{\thmbotspace}
                {\slshape}{\thmindent}{\bfseries}{.}{\thmheadspace}
                {{\numberseries \thmnumber{\bf #2 }}\thmnote{#3}}
\newtheoremstyle{bfupright head,upright body}
                {\thmtopspace}{\thmbotspace}
                {\upshape}{\thmindent}{\bfseries}{.}{\thmheadspace}
                {{\numberseries \thmnumber{\bf #2 }}\thmnote{#3}}
\newtheoremstyle{bfit head,upright body}
                {\thmtopspace}{\thmbotspace}
                {\upshape}{\thmindent}{\upshape}{.}{\thmheadspace}
                {{\numberseries\thmnumber{\bf #2 }}
                {\bfseries\itshape\thmnote{\negthickspace#3}}}
\newtheoremstyle{it head,upright body}
                {\thmtopspace}{\thmbotspace}
                {\upshape}{\thmindent}{\upshape}{.}{\thmheadspace}
                {{\numberseries\thmnumber{\bf #2 }}
                {\itshape\thmnote{\negthickspace#3}}}
\newtheoremstyle{fixed bf head,slanted body}
                {\thmtopspace}{\thmbotspace}{\slshape}
                {\thmindent}{\bfseries}{.}{\thmheadspace}
                {{\numberseries \thmnumber{\bf  #2 }}\thmname{#1}\thmnote{ (#3)}}
\newtheoremstyle{fixed bf head,upright body}
                {\thmtopspace}{\thmbotspace}{\upshape}
                {\thmindent}{\bfseries}{.}{\thmheadspace}
                {{\numberseries \thmnumber{\bf #2 }}\thmname{#1}\thmnote{ (#3)}}
\newtheoremstyle{fixed bfit head,upright body}
                {\thmtopspace}{\thmbotspace}{\upshape}
                {\thmindent}{\bfseries\itshape}{.}{\thmheadspace}
                {{\numberseries \thmnumber{\bf#2 }}\thmname{#1}\thmnote{ (#3)}}
\newtheoremstyle{sc head,small body}
                {\thmtopspace}{\thmbotspace}
                {\small\upshape}{\thmindent}{\scshape}{.}{\thmheadspace}
                {\thmname{#1}}
\newtheoremstyle{numbered paragraph}
                {\thmtopspace}{\thmbotspace}{\upshape}
                {\thmindent}{\upshape}{}{0pt}
                {{\numberseries \thmnumber{\bf #2 }}}
\newtheoremstyle{unnumbered paragraph}
                {\thmtopspace}{\thmbotspace}{\upshape}
                {\parindent}{\upshape}{}{0pt}
\theoremstyle{bfupright head,slanted body}
\newtheorem{res}{}[section]             \newtheorem*{res*}{}
\theoremstyle{bfit head,upright body}
                 \newtheorem*{com*}{}
\theoremstyle{bfupright head,upright body}
\newtheorem{bfhpg}[res]{}               \newtheorem*{bfhpg*}{}
\theoremstyle{it head,upright body}
               \newtheorem*{ithpg*}{}
\theoremstyle{sc head,small body}
\theoremstyle{fixed bf head,slanted body}
\newtheorem{thm}[res]{Theorem}          \newtheorem*{thm*}{Theorem}
\newtheorem{prp}[res]{Proposition}      \newtheorem*{prp*}{Proposition}
\newtheorem{cor}[res]{Corollary}        \newtheorem*{cor*}{Corollary}
\newtheorem{lem}[res]{Lemma}            \newtheorem*{lem*}{Lemma}
\theoremstyle{fixed bf head,upright body}
\newtheorem{dfn}[res]{Definition}       \newtheorem*{dfn*}{Definition}
\newtheorem{con}[res]{Construction}     \newtheorem*{con*}{Construction}
      \newtheorem*{obs*}{Observation}
\newtheorem{rmk}[res]{Remark}           \newtheorem*{rmk*}{Remark}
          \newtheorem*{exa*}{Example}
         \newtheorem*{exe*}{Exercise}
            \newtheorem{stp*}{Setup}
\theoremstyle{numbered paragraph}
\theoremstyle{unnumbered paragraph}
\newtheorem{ipg*}{}
\newlength{\thmlistleft}        
\newlength{\thmlistright}       
\newlength{\thmlistpartopsep}   
\newlength{\thmlisttopsep}      
\newlength{\thmlistparsep}      
\newlength{\thmlistitemsep}     
\newcounter{eqc}
  {\end{list}}%
\newcounter{prt}
  {\end{list}}%
\newcounter{rqm}
  {\end{list}}%
\newcounter{exercise}
  {\end{list}}%
\newenvironment{prf*}[1][Proof]{%
  \begin{proof}[\it #1]
    \setcounter{equation}{0}
    \renewcommand{\theequation}{\arabic{equation}}}
  {\end{proof}
}
\newcommand{\pgref}[1]{(\ref{#1})}
\renewcommand{\eqref}[1]{\pgref{eq:#1}}
\numberwithin{equation}{res}
\newcounter{marcom}
\renewcommand{\ge}{\geqslant}
\newcommand{\onto}{\twoheadrightarrow}
\newcommand{\m}{\mathfrak{m}}
\newcommand{\p}{\mathfrak{p}}
\newcommand{\Hom}[3][R]{\operatorname{Hom}_{#1}(#2,#3)}
\newcommand{\PHom}{\operatorname{Hom}}
\newcommand{\Ext}[4][R]{\operatorname{Ext}_{#1}^{#2}(#3,#4)}
\newcommand{\ann}[1]{\operatorname{ann}(#1)}
\newcommand{\depth}{\operatorname{depth}}
\newcommand{\grade}[1]{\operatorname{grade}( #1)}
\newcommand{\height}[1]{\operatorname{ht}( #1)}
\renewcommand{\Gamma}{\textrm{C}}
\newcommand{\koszul}{\textrm{K}}
\newcommand{\homology}{\textrm{H}}
\newcommand{\tot}{\textrm{Tot}}
\def\urltilda{\kern -.15em\lower .7ex\hbox{\~{}}\kern .04em}
\def\widebardisplay#1{%
  \setbox0=\hbox{$\displaystyle #1$}
  \dimen0=\wd0%
  \advance\dimen0 by -3.2pt
  \vbox{%
    \nointerlineskip%
    \moveright 1.2pt 
    \vbox{\hrule width \dimen0}%
    \nointerlineskip%
    \kern 1.25pt
    \box0%
    }%
  }
\def\widebartext#1{%
  \setbox0=\hbox{$#1$}
  \dimen0=\wd0%
  \advance\dimen0 by -3.2pt
  \vbox{%
    \nointerlineskip%
    \moveright 1.2pt 
    \vbox{\hrule width \dimen0}%
    \nointerlineskip%
    \kern 1.25pt
    \box0%
    }%
  }
\def\widebarscript#1{%
  \setbox0=\hbox{$\scriptstyle #1$}
  \dimen0=\wd0%
  \advance\dimen0 by -2pt
  \vbox{%
    \nointerlineskip%
    \moveright 1pt 
    \vbox{\hrule width \dimen0}%
    \nointerlineskip%
    \kern .8pt
    \box0%
    }%
  }
\def\widebarscriptscript#1{%
  \setbox0=\hbox{$\scriptscriptstyle #1$}
  \dimen0=\wd0%
  \advance\dimen0 by -1pt
  \vbox{%
    \nointerlineskip%
    \moveright .5pt 
    \vbox{\hrule width \dimen0}%
    \nointerlineskip%
    \kern .6pt
    \box0%
    }%
  }
\begin{document}

\allowdisplaybreaks[2]

\title{Duality for Koszul Homology over Gorenstein Rings}

\author[C.\, Miller  ]{Claudia Miller}

\address{C.M.\newline\hspace*{1em} Mathematics Department, Syracuse University, Syracuse, NY
  13244, U.S.A.}

\email{clamille@syr.edu}

\urladdr{http://www.phy.syr.edu/$\sim$clamille/}

\author[H. Rahmati]{Hamidreza Rahmati}

\address{H.R.\newline\hspace*{1em} Mathematics Department, Syracuse University, Syracuse, NY
  13244, U.S.A.}

\email{hrahmati@syr.edu}

\author[J. Striuli]{Janet Striuli}

\address{J.S.\newline\hspace*{1em} Department of Math.\ and C.S.,
  Fairfield University, Fairfield, CT~06824,~U.S.A.}

\email{jstriuli@fairfield.edu}

\urladdr{http://www.faculty.fairfield.edu/jstriuli}

\thanks{This research was partly supported by  NSF grant DMS\,0901427 (J.S.), and  NSF grant DMS\, 100334 (C.M.).}

\date{\today}

\keywords{Gorenstein ring, strongly Cohen-Macaulay ideals, Poincar\'e duality}

\subjclass[2010]{Primary: 13D02; 13D03. Secondary: 18G40.}


\begin{abstract}
We study Koszul homology over Gorenstein rings. If an ideal is strongly Cohen-Macaulay, the Koszul homology algebra satisfies Poincar\'e duality. We prove a version of this duality which holds for all ideals and allows us to give two criteria for an ideal to be strongly Cohen-Macaulay. The first can be compared to a result of Hartshorne and Ogus; the second is a generalization of a result of Herzog, Simis, and Vasconcelos using sliding depth. 
\end{abstract}

\maketitle

\section*{Introduction}
\label{sec:intro}
We study duality properties of Koszul homology and their
implications. Let $I$ be an ideal of a commutative noetherian ring $R$
and $\homology_i(I)$ be its $i$th Koszul homology module. The differential
graded algebra structure on the Koszul complex induces homomorphisms
\begin{align*}
\homology_{l-g-i}(I)  \xrightarrow{\varphi_i}
\Hom{\homology_i(I)}{\homology_{l-g}(I)} ,
\end{align*}
for all $i$, where $\ell$ is the minimal number of
generators of $I$ and $g$ is its grade.

If every map $\varphi_i$ is an isomorphism then the Koszul homology
algebra is said to be Poincar\'e, equivalently the ideal is said to
satisfy Poincar\'e duality.  In \cite{H}, Herzog proves that the Koszul
homology algebra is Poincar\'e when the ring is Gorenstein and the
ideal is strongly Cohen-Macaulay, that is when all of its
non-vanishing Koszul homologies are Cohen-Macaulay modules.  A proof
of this fact can also be found in \cite{G}. Herzog's result extends
the work of Avramov and Golod \cite{AG}, where the authors prove that
the maximal ideal of a local ring satisfies Poincar\'e duality if and only if the ring is Gorenstein.

We first give a version of this kind of duality which holds for all
ideals of a Gorenstein ring; see Corollary~\ref{cor:compare}.

\vspace{1mm}

\noindent {\bf Theorem A.}
{\em{
Let $R$ be a Gorenstein ring and $I$ be an ideal of grade $g$ that is minimally generated
by $\ell$ elements. Then there are isomorphisms
\begin{align*}
  \Hom{\Hom{\homology_i(I)}{\homology_{\ell-g}(I)}}{{\homology_{\ell-g}(I)}}
  &\cong \Hom{\homology_{\ell-g-i}(I)}{\homology_{\ell-g}(I)}
\end{align*}
for all $i\geq 0$.
}}

\vspace{1mm}

We also prove a generalized version of this theorem for Cohen-Macaulay
rings with canonical module; see Theorem
\ref{thm:genduality-module}. In \cite{Ch}, Chardin proves these
results in the graded setting using a spectral sequence.  Our proof,
which is the content of Section 1, employs a different spectral sequence. 

In Sections 2 and 3, further analysis of this spectral sequence allows us 
to give several criteria for an ideal to be strongly Cohen-Macaulay 
in the presence of conditions ensuring that duality holds; 
see Proposition~\ref{prp:S2} and Remark~\ref{rmk:equivalenttoS2}. 

Strongly Cohen-Macaulay ideals were formally
introduced in \cite{Hu} and since then have been the subject of
intense study.  The interest in these ideals is justified by the
nice geometric properties of the schemes they define;  
under proper assumptions the Rees algebra and the symmetric algebra 
of these ideals are isomorphic and Cohen-Macaulay; see \cite{SV} and \cite{Va-80}.  
A large class  of strongly Cohen-Macaulay ideals is given
by ideals that are in the linkage class of a complete intersection, as shown by Huneke in \cite{hu82}. 
Over a Gorenstein ring any ideal whose minimal number of generators
$\mu(I)$ is at most $\grade I +2$ is strongly Cohen-Macaulay; see
\cite{Ku}, \cite{AH}.

In Section 2 we give our first criterion for an ideal to be strongly
Cohen-Macaulay; see Theorem~\ref{thm:dimR/2}.

\vspace{1mm}

\noindent {\bf Theorem B.}
{\em{
  Let $R$ be a local Gorenstein ring and $I$ be an ideal of $R$.  Let
  $h$ be an integer such that $h \geq \max \{ 2, \frac{1}{2} \dim
  (R/I) \}$.  If, for all $i\geq 0$, the $R/I$-module $\homology_i(I)$ 
  satisfies Serre's condition $S_h$, then $I$ is strongly
  Cohen-Macaulay.
}}

\vspace{1mm}

This theorem extends to all Koszul homology modules the following
criterion of Hartshorne and Ogus \cite{HO}: Let $R$ be a Gorenstein
local ring and $I$ be an ideal of $R$ such that $R/I$ is factorial. If
$R/I$ satisfies $S_2$ and the inequality $\depth (R/I)_{\mathfrak{p}} \geq \frac{1}{2}(\dim
(R/I)_{\mathfrak{p}})+1$ holds for all prime ideals $\mathfrak{p}$ of $R/I$ of height at least 2, 
then $R/I$ is Cohen-Macaulay.  Notice that by considering all
the Koszul homologies, one can lower the depth required
and remove the dependence on factoriality.

The proof of Theorem B comes from a careful analysis of the spectral
sequence employed in the proof of Theorem~\ref{thm:genduality},
together with a generalization of a result of Huneke \cite{Hu}, 
namely Proposition~\ref{prp:S2}, which is then used in the rest of the paper. 
Moreover, we use a result from  \cite{HO};
see the discussion in Remark~\ref{rmk:HO}.   

In Section 3 we give another criterion for an ideal to be strongly
Cohen-Macaulay. In \cite{HVV} Herzog, Vasconcelos and Villareal introduce the notion
of sliding depth which corresponds to the case $h=0$ of the later
generalization $h$-sliding depth $SD_h$ in \cite{HSV}: an ideal $I$ satisfies
$SD_h$ if
\[
\depth \homology_i(I) \geq \min\{\dim R-\grade I, \dim R
-\mu (I) +i +h\}.
\]
In \cite{HVV} the authors prove that if a Cohen-Macaulay ideal $I$ satisfies 
the inequality $\mu(I_{\p}) \leq \max\{\height{I},\height{\p} -1\}$ for every prime ideal $\p$
containing $I$ and satisfies sliding depth, then $I$ is strongly Cohen-Macaulay.  
Using the spectral sequence from the proof of Theorem~\ref{thm:genduality}, we
recover and extend this result in Theorem~\ref{thm:SDh} by weakening the condition
on $\mu(I)$ while strengthening sliding depth to $h$-sliding depth
$SD_h$.  Among the many corollaries we obtain, the most interesting is
perhaps the following; see Corollaries~\ref{SDhj=0} and \ref{SDhj=1}.

\vspace{1mm}

\noindent {\bf Theorem C.}
{\em{ Let $R$ be a local Gorenstein ring, and let $I$ be an ideal of $R$ such that
$R/I$ satisfies Serre's condition $S_2$. Suppose that $I$  satisfies $SD_1$ and
one of the following two conditions holds
\begin{itemize}
\item[(a)] $\mu(I_{\p}) \leq \height{\p}$ for all prime ideals $\p \supseteq I$
or
\item[(a$^\prime$)\!] $\mu(I_{\p}) \leq \height{\p}+1$ for all prime ideals $\p \supseteq I$
and $\homology_1(I)$ satisfies $S_2$.
\end{itemize}
\noindent Then $I$ is strongly Cohen-Macaulay.
}}
\vspace{1mm}

In Section 4 we present some results on the relationship between the Koszul homology modules for low dimensional ideals. 

We finish the introduction by settling some notation. Given a sequence ${\bf y}$ of
$\ell$ elements in $R$, we denote by $\koszul({\bf y})$ the Koszul
complex on the elements $\bf{y}$ and by $\homology_i ({\bf y})$ its
$i$th homology module. The $i$th cohomology module of the complex
$\Hom{\koszul ({\bf y})}{R}$ is the Koszul cohomology, and it is
denoted by $\homology^{i}({\bf y})$.   For an $R$-module
$M$, its Koszul complex and its Koszul homology and cohomology modules
are denoted by $\koszul({\bf y};M)$, $\homology_i({\bf y};M)$ and
$\homology^i({\bf y};M)$, respectively. The Koszul complex of an ideal
$I$ is computed on a minimal set of generators, and it is denote by
$\koszul(I)$. Its homology and cohomology modules are denoted by
$\homology_i(I)$ and by $\homology^i(I)$. Moreover, we will often use the
isomorphism $\homology^{i}({\bf y};M)\cong \homology_{\ell -i}({\bf
  y};M)$. If the sequence ${\bf y}$ is of
length $\ell$ then the only non-zero homology modules are
$\homology_i({\bf y})$ for $i=0, \dots, \ell -g$,
where $g$ is the grade of the ideal generated by ${\bf y}$, 
and each of these has dimension equal to the dimension of $R/I$ 
since they have the same support over $R$.

In the first half of the paper, we give our results in terms of Koszul homology on a sequence of
elements rather than the Koszul homology of an ideal. 
We do this because in the proofs of our applications we often localize and lose
minimality. However, in Sections 3 and 4 the results are given for ideals since, historically, that is the main case of interest.

Throughout the paper we  use Serre's conditions $S_n$: 
An $R$-module $M$ is said to satisfy $S_n$ if the inequality 
$\depth M_{\mathfrak{p}} \geq \min\{n,\dim{R_{\mathfrak{p}}}\}$ holds for every prime ideal
$\mathfrak{p}$ in $ R$.  We also use the following fact:

\vspace{3mm}

\noindent {\bf Fact.}
{\em{Let $M$ and $N$ be  $R$-modules which satisfy $S_2$ and $S_1$,
respectively. Assume that the $R$-homomorphism $\varphi\colon M\to N$ gives an
isomorphism when localized at any prime ideal $\mathfrak{p}$ such that
$\height {\mathfrak {p}} \leq 1$. Then $\varphi$ is an isomorphism.
}}
\vspace{3mm}

\section{Spectral sequence and generalized duality result} 
\label{sec:PD}

The aim of this section is to give a duality modeled on Poincar\'e
duality that holds for all ideals of a Gorenstein ring.  We also prove
a more general version for Cohen-Macaulay rings with a canonical
module $\omega_R$.  In \cite[Lemma 5.7]{Ch}, Chardin proves this in
the graded setting; his proof invokes a different spectral sequence
from the one that we use, which we describe below.

\begin{con}
\label{rmk:spectralsequence}
Let ${\bf y}$ be a sequence of $\ell$ elements generating an ideal
$({\bf y})$ of grade $g$.  Let ${\textrm J}$ be a minimal injective
resolution of $R$, and let $\koszul$ denote the Koszul complex
$\koszul({\bf y})$.  Consider the double complex
$\Gamma=\Hom{\koszul}{{\textrm J}}$ with
$\Gamma^{p,q}=\Hom{\koszul_p}{{\textrm J}^{q}}$ and its two associated
spectral sequences. Taking homology first in the vertical $q$
direction, one obtains a collapsing spectral sequence, yielding 
\begin{align*}
  \homology^n(\tot \Gamma)= \homology_n(\Hom{\koszul }{R}) \cong
  \homology^n({\bf y})\cong\homology_{\ell-n}({\bf y}).
\end{align*}
The second convergent spectral sequence, obtained by taking homology
first in the horizontal $p$ direction, is therefore of the form
\begin{eqnarray*}
  \label{e1}
  E_{2}^{p,q}=\Ext{q}{\homology_p({\bf y})}{R} \Rightarrow \homology_{\ell-(p+q)}({\bf y}).
\end{eqnarray*}
Note that $E_{2}^{p,q}=0$ for $q<g$, since the ideal generated
by ${\bf y}$ has grade $g$ and annihilates $\homology_i({\bf y})$.
Furthermore, when $R$ is Gorenstein of dimension $d$, one has that
$E_{2}^{p,q}=0$ for $q>d$.
\end{con}

We now give a duality theorem for all ideals of a Gorenstein ring.

\begin{thm}
\label{thm:genduality}
Let $R$ be a Gorenstein  ring and ${\bf y}$ be a sequence of 
$\ell$ elements that generate an ideal of grade $g$.  
Then there is an
isomorphism
\begin{align*}
\Ext{g}{\Ext{g}{\homology_i({\bf y})}{R}}{R}  &\cong  \Ext{g}{\homology_{\ell-g-i}({\bf y})}{R}
\end{align*}
for every $i\geq 0$.
\end{thm}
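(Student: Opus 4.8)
The plan is to extract the desired isomorphism from the spectral sequence $E_{2}^{p,q}=\Ext{q}{\homology_p({\bf y})}{R} \Rightarrow \homology_{\ell-(p+q)}({\bf y})$ of Construction~\ref{rmk:spectralsequence}. Since $R$ is Gorenstein, the nonzero rows of the $E_2$-page lie in the band $g\le q\le d$, and moreover each $\homology_p({\bf y})$ is annihilated by the grade-$g$ ideal $({\bf y})$, so actually $\Ext{q}{\homology_p({\bf y})}{R}=0$ for $q<g$. The crucial observation is that the bottom nonzero row, $q=g$, carries a recognizable functor: for a module $M$ killed by an ideal of grade $g$ in a Gorenstein ring, $\Ext{g}{M}{R}$ is the Matlis-type dual that we want, and it is exact (it is the composition of localization-type exactness with the fact that $g=\operatorname{grade}(\ann M)$). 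So the first step is to run the differentials on the $E_2$-page and track what survives onto the bottom row.

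Next I would compute the target. We have $\homology_{\ell-g-i}({\bf y})$ contributing to total degree $n$ with $\ell-n = \ell-g-i$, i.e. $n = g+i$, so the relevant total homology is $\homology^{g+i}(\tot\Gamma)$, and its associated graded picks up $E_{\infty}^{p,q}$ with $p+q=g+i$. The bottom-row entry here is $E_{\infty}^{i,g}$, a subquotient of $E_2^{i,g}=\Ext{g}{\homology_i({\bf y})}{R}$. The key point I expect to need is that on the $q=g$ row the incoming differentials vanish for degree reasons (they would originate in row $q<g$, which is zero) while the outgoing differentials $d_r\colon E_r^{i,g}\to E_r^{i-r,g+r-1}$ land in rows $q = g+r-1 \ge g+1$; these outgoing differentials need not vanish, so $E_\infty^{i,g}$ is genuinely a \emph{submodule} of $E_2^{i,g}$, not all of it. To turn this into the stated isomorphism I would apply the exact functor $\Ext{g}{-}{R}$ to the whole spectral sequence — more precisely, dualize: because $\Ext{g}{-}{R}$ restricted to modules annihilated by $({\bf y})$ is a contravariant exact duality (it is, up to the canonical module, Matlis duality over the Gorenstein quotient obtained by factoring out a maximal regular sequence inside $({\bf y})$), applying it interchanges sub- and quotient-objects and converts the double complex $\Gamma$ and its filtration into another double complex whose spectral sequence reads off $\Ext{g}{\Ext{g}{\homology_i({\bf y})}{R}}{R}$ on the appropriate row. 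Comparing the two filtrations yields the isomorphism with $\Ext{g}{\homology_{\ell-g-i}({\bf y})}{R}$.

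A cleaner route, which is probably what I would actually write, avoids dualizing the whole spectral sequence: localize. Both sides of the claimed isomorphism are modules over $R/({\bf y})$, which has dimension $d-g$; call it $\bar R$ — no, it need not be Cohen-Macaulay, so I instead work with $R/(\underline{a})$ where $\underline a$ is a maximal regular sequence in $({\bf y})$, a Gorenstein ring $S$ of dimension $d-g$ over which $\Ext{g}{-}{R}\simeq\Hom[S]{-}{S}$ on finitely generated modules supported in $V({\bf y})$. Over $S$ the statement becomes $\Hom[S]{\Hom[S]{\homology_i}{S}}{S}\cong\Hom[S]{\homology_{\ell-g-i}}{S}$. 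Now invoke the \textbf{Fact} from the introduction: both sides satisfy $S_2$ over $S$ (they are $\Hom[S]{-}{S}$ of finitely generated modules, hence reflexive-type, hence $S_2$), so it suffices to check the isomorphism after localizing at every prime of height $\le 1$ in $S$. At such primes the spectral sequence degenerates — localizing at a prime $\p$ with $\dim S_\p\le 1$ kills the higher $\Ext$'s for dimension reasons, so the only surviving row is $q=g$, all differentials vanish, and $E_2^{p,q}=E_\infty^{p,q}$; one then reads $(\homology_{\ell-g-i})_\p \cong (\Ext{g}{\homology_i}{R})_\p$ directly, and dualizing once more over $S_\p$ gives the result.

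The main obstacle is the handling of the outgoing differentials on the bottom row in the global argument: it is genuinely false in general that $E_2^{i,g}=E_\infty^{i,g}$, so one cannot simply "read off" the isomorphism from the $E_2$-page. The localization trick sidesteps this by forcing degeneration at height-$\le 1$ primes and then propagating via the $S_2$-Fact; the price is that one must verify that the double $\operatorname{Hom}$ into $S$ preserves $S_2$ (a standard reflexivity argument: $\Hom[S]{N}{S}$ is the kernel of a map between finite products of copies of $S$, hence $S_2$ since $S$ is) and that $\Ext{g}{-}{R}$ really does agree with $\Hom[S]{-}{S}$ via a regular-sequence reduction and Rees's lemma. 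If instead one prefers the spectral-sequence-duality approach, the obstacle becomes bookkeeping: one must show the contravariant exact functor $\Ext{g}{-}{R}$ carries the first spectral sequence to (a reindexing of) the second, which again comes down to the same reduction-to-$S$ identification but now applied uniformly to the double complex $\Gamma$. Either way, everything hinges on the single clean input that $\Ext{g}{-}{R}$ is an exact contravariant duality on the category of finitely generated $R$-modules annihilated by $({\bf y})$.
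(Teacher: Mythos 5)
Your overall strategy coincides with the paper's: dualize the edge homomorphism $\psi_i\colon \homology_{\ell-g-i}({\bf y})\to\Ext{g}{\homology_i({\bf y})}{R}$ coming from the spectral sequence, observe that the source and target of $\Ext{g}{\psi_i}{R}$ are of the form $\Hom[S]{-}{S}$ for $S=R/({\bf x})$ with ${\bf x}$ a maximal regular sequence in $({\bf y})$, hence satisfy $S_2$, and invoke the Fact from the introduction to reduce to primes of height at most $1$ in $S$. Up to that point your argument matches the paper's (modulo the point that the Fact requires an actual globally defined map, which you leave implicit; the right candidate is $\Ext{g}{\psi_i}{R}$).

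The gap is in the height-one case. You assert that localizing at $\p$ with $\dim S_\p\le 1$ ``kills the higher Ext's for dimension reasons, so the only surviving row is $q=g$.'' That is true only when $\dim S_\p=0$. When $\dim S_\p=1$ the ring $R_\p$ has dimension $g+1$ and the row $q=g+1$ is generally nonzero: $\Ext[R_\p]{g+1}{\homology_p({\bf y})_\p}{R_\p}$ vanishes only if $\homology_p({\bf y})_\p$ is maximal Cohen--Macaulay over $S_\p$, which is not assumed. Consequently $\psi_i$ need not be an isomorphism after localizing at such $\p$; the two-row spectral sequence only yields a short exact sequence
\begin{align*}
0 \to \Ext{d}{\homology_i({\bf y})}{R} \to \homology_{\ell-g-i}({\bf y}) \xrightarrow{\;\psi_i\;} \Ext{g}{\homology_i({\bf y})}{R} \to 0
\end{align*}
(in the localized situation, with $d=g+1$), whose kernel $L=\Ext{d}{\homology_i({\bf y})}{R}$ has finite length. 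The missing step --- which is exactly how the paper finishes --- is to apply $\Ext{g}{-}{R}$ to this sequence and use that $\Ext{j}{L}{R}=0$ for all $j<d$ because $L$ has finite length and $R$ is Gorenstein; this shows that $\Ext{g}{\psi_i}{R}$ is an isomorphism even though $\psi_i$ is not. Relatedly, your closing claim that $\Ext{g}{-}{R}$ is an \emph{exact} contravariant duality on all finitely generated modules annihilated by $({\bf y})$ is false whenever $d>g$: the functor $\Hom[S]{-}{S}$ kills finite-length modules but is faithful on free $S$-modules, so it cannot be exact. It is a duality only on maximal Cohen--Macaulay $S$-modules, and the content of the theorem is precisely to control this failure, which is concentrated in lower-dimensional modules such as $L$.
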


The idea is that one more application of the functor $\Ext{g}{-}{\omega_R}$,
itself a duality on Cohen-Macaulay modules of dimension $d-g$, removes
the lower-dimensional obstructions to it being a duality on Koszul homology.

Before we prove this theorem, we restate it in a way that enables one to
compare it to classical Poincar\'e duality. To do so, we employ some
well-known isomorphisms.

\begin{rmk}
\label{rmk:ExtgHom}
Let $R$ be a Cohen-Macaulay ring and ${\bf y}$ a sequence of $\ell$
elements that generate an ideal $I$ of grade $g$.  It is
well-known that the top non-vanishing Koszul homology satisfies
$\homology_{\ell-g}({\bf y}) \cong \Ext{g}{R/I}{R}$. Let $M$ be an
$R/I$-module.  Choosing any $R$-regular sequence ${\bf x}$ of length
$g$ in the ideal $I$, one therefore obtains natural isomorphisms (the
third one being adjunction) for all $i$,
\begin{align*}
\Hom{M}{\homology_{l-g}({\bf y})} &\cong \Hom{M}{\Ext{g}{R/I}{R}}  \\
&\cong \Hom{M}{\Hom{R/I}{R/({\bf x})}}  \\
&\cong \Hom{M}{R/({\bf x})}  \\
&\cong \Ext{g}{M}{R}.
\end{align*}
\end{rmk}

The identifications in the remark now yield the 
following version of Theorem~\ref{thm:genduality}. 

\begin{cor}
\label{cor:compare}
Let $R$ be a Gorenstein  ring and ${\bf y}$ a set of
$\ell$ elements that generate an ideal of grade $g$.  
Then there is an isomorphism
\begin{align*}
\Hom{\Hom{\homology_i({\bf y})}{\homology_{\ell-g}({\bf y})}}{{\homology_{\ell-g}({\bf y})}}   &\cong
\Hom{\homology_{\ell-g-i}({\bf y})}{\homology_{\ell-g}({\bf y})}
\end{align*}
for all $i\geq 0$.
\end{cor}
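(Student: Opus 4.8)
The plan is to deduce Corollary~\ref{cor:compare} from Theorem~\ref{thm:genduality} purely formally, by translating every term through the chain of natural isomorphisms recorded in Remark~\ref{rmk:ExtgHom}. First I would fix an $R$-regular sequence $\mathbf{x}$ of length $g$ lying in the ideal $I=(\mathbf{y})$ generated by $\mathbf{y}$; such a sequence exists because $\grade I=g$. This is the single auxiliary choice needed, and Remark~\ref{rmk:ExtgHom} already provides, for every $R/I$-module $M$, a natural isomorphism $\Hom{M}{\homology_{\ell-g}(\mathbf{y})}\cong\Ext{g}{M}{R}$.

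The main step is to apply this dictionary twice, to the two sides of the conclusion of Theorem~\ref{thm:genduality}. On the right-hand side, since $\homology_{\ell-g-i}(\mathbf{y})$ is an $R/I$-module (the ideal $I$ annihilates all Koszul homology of $\mathbf{y}$), Remark~\ref{rmk:ExtgHom} gives directly
\begin{align*}
\Ext{g}{\homology_{\ell-g-i}(\mathbf{y})}{R}\cong\Hom{\homology_{\ell-g-i}(\mathbf{y})}{\homology_{\ell-g}(\mathbf{y})}.
\end{align*}
On the left-hand side I would first identify the inner term: again $\homology_i(\mathbf{y})$ is an $R/I$-module, so $\Ext{g}{\homology_i(\mathbf{y})}{R}\cong\Hom{\homology_i(\mathbf{y})}{\homology_{\ell-g}(\mathbf{y})}$. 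Here I should check that this $\Ext$ module is itself an $R/I$-module --- it is, because $I$ annihilates $\homology_i(\mathbf{y})$ and hence annihilates $\Ext{g}{\homology_i(\mathbf{y})}{R}$ --- so Remark~\ref{rmk:ExtgHom} applies a second time to give
\begin{align*}
\Ext{g}{\Ext{g}{\homology_i(\mathbf{y})}{R}}{R}\cong\Hom{\Ext{g}{\homology_i(\mathbf{y})}{R}}{\homology_{\ell-g}(\mathbf{y})}\cong\Hom{\Hom{\homology_i(\mathbf{y})}{\homology_{\ell-g}(\mathbf{y})}}{\homology_{\ell-g}(\mathbf{y})},
\end{align*}
the last isomorphism obtained by applying $\Hom{-}{\homology_{\ell-g}(\mathbf{y})}$ to the isomorphism $\Ext{g}{\homology_i(\mathbf{y})}{R}\cong\Hom{\homology_i(\mathbf{y})}{\homology_{\ell-g}(\mathbf{y})}$ (functoriality of $\Hom$ in the first argument). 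Combining these with the isomorphism of Theorem~\ref{thm:genduality} yields the asserted isomorphism.

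The only point requiring a little care --- the ``hard part,'' though it is minor --- is the bookkeeping that each intermediate module really is an $R/I$-module, so that Remark~\ref{rmk:ExtgHom} legitimately applies at every stage; this reduces to the observation that $\Ext{g}{M}{R}$ is annihilated by $\ann{M}$ whenever $M$ is. Beyond that, everything is a formal composition of natural isomorphisms, so no genuine obstacle arises; in particular, naturality of the maps in Remark~\ref{rmk:ExtgHom} means the final isomorphism is canonical once $\mathbf{x}$ is fixed.
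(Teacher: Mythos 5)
Your proposal is correct and follows exactly the paper's route: the paper derives Corollary~\ref{cor:compare} from Theorem~\ref{thm:genduality} by the identification $\Hom{M}{\homology_{\ell-g}({\bf y})}\cong\Ext{g}{M}{R}$ of Remark~\ref{rmk:ExtgHom}, applied to both sides. You merely spell out the routine bookkeeping (that each intermediate module is an $R/I$-module) that the paper leaves implicit.
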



Now we give the proof of the theorem.

\begin{prf*}[Proof of Theorem~\ref{thm:genduality}]
Set $d$ to be the dimension of $R$ and $I$ the ideal generated by
the seqence ${\bf y}$.  Consider the spectral sequence from
Construction~\ref{rmk:spectralsequence}.  The edge homomorphisms 
provide maps
 \[
  \psi_i\colon \homology_{\ell-g-i}({\bf y}) \to
  \Ext{g}{\homology_i({\bf y})}{R}
\]
  and hence maps
\begin{align*}
  \Ext{g}{\Ext{g}{\homology_i({\bf y})}{R}}{R}
  &\xrightarrow{\Ext{g}{\psi_i}{R}} \Ext{g}{\homology_{\ell-g-i}({\bf
      y})}{R}.
\end{align*}
Both modules above satisfy Serre's condition $S_2$ as $R/I$-modules,
for they can be rewritten in the form $\PHom_{R/I}(-,R/({\bf x}))$,
where ${\bf x}$ is any $R$-regular sequence of length $g$ in $I$.
Thus, by the fact from the introduction, to show that the
map $\Ext{g}{\psi_i}{R}$ is an isomorphism for each $i$, it suffices
to prove that it is an isomorphism in codimension 1 over the ring
$R/I$.  Localizing at a prime ideal of $R$ whose image in $R/I$ has
height at most 1, one sees that it remains to prove that
$\Ext{g}{\psi_i}{R}$ is an isomorphism for the case that $\dim R/I
\leq 1$, that is, $d-g \leq 1$.

If $\dim R/I =0$, that is, $d=g$, the spectral sequence collapses to
a single row at $q=g$, and so the edge homomorphism $\psi_i$ is
already an isomorphism for all $i\geq 0$.

If $\dim R/I = 1$, that is, $d=g+1$, the spectral sequence has at most
two nonzero rows and thus satisfies $E_2=E_\infty$ and yields particularly short filtrations of
$\homology^n(\tot \Gamma)$, namely given by the exact sequences
involving the edge homomorphisms 
\begin{align*}
\label{e2}
&0 \to
\Ext{d}{\homology_i({\bf y})}{R}  \to
\homology_{\ell-g-i}({\bf y})  \xrightarrow{\psi_i}
\Ext{g}{\homology_i({\bf y})}{R} \to
0.
\end{align*}
Set $L=\Ext{d}{\homology_i({\bf y})}{R}$, and apply $\Ext{g}{-}{R}$ to
the sequence above.  Since $R$ is Gorenstein, the module $L$ has
finite length, and so $\Ext{j}{L}{R}$ vanishes for all $j<d$, yielding
the desired statement.
\end{prf*}

\begin{rmk}
\label{rmk:edge}
The isomorphisms in Theorem~\ref{thm:genduality} are induced by the edge
homomorphism from the spectral sequence of
Construction~\ref{rmk:spectralsequence}.
\end{rmk}

We now extend Theorem~\ref{thm:genduality} to the setting of
Cohen-Macaulay rings. This generalization is also given by Chardin in
\cite{Ch} for $M=R$ in the graded case.

\begin{thm}
\label{thm:genduality-module}
Let $R$ be a Cohen-Macaulay noetherian ring with canonical module
$\omega_R$. Let $M$ be a maximal Cohen-Macaulay module, and let ${\bf
  y}$ be a sequence of elements that generate an ideal of grade $g$.
Then there is an isomorphism
\begin{align*}
\Ext{g}{\Ext{g}{\homology_i({\bf y};M)}{\omega_R}}{\omega_R}\cong \Ext{g}{\homology_{\ell-g-i}({\bf y};\Hom{M}{\omega_R})}{\omega_R}
\end{align*}
for every $i\geq 0$.
\end{thm}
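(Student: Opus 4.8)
The plan is to mimic the proof of Theorem~\ref{thm:genduality}, replacing the ring $R$ inside the Koszul complex by the module $M$ and the resolved module $R$ by the canonical module $\omega_R$. First I would set up the analogue of Construction~\ref{rmk:spectralsequence}: let ${\textrm J}$ be a minimal injective resolution of $\omega_R$, write $\koszul=\koszul({\bf y})$, and form the double complex $\Gamma=\Hom{\koszul\otimes_R M}{{\textrm J}}$, so that $\Gamma^{p,q}=\Hom{\koszul_p\otimes_R M}{{\textrm J}^{q}}\cong\Hom{\koszul_p}{\Hom{M}{{\textrm J}^{q}}}$. Taking homology in the $q$-direction first, the key new input is that $M$ is maximal Cohen--Macaulay over the Cohen--Macaulay ring $R$ with canonical module, so $\Ext{q}{M}{\omega_R}=0$ for $q>0$ and equals $\Hom{M}{\omega_R}$ for $q=0$; since each $\koszul_p$ is free, this forces the first spectral sequence to collapse onto the row $q=0$, whose $p$th term is $\Hom{\koszul_p}{\Hom{M}{\omega_R}}$. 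Hence $\homology^{n}(\tot\Gamma)$ is the cohomology of the Koszul cochain complex of $\Hom{M}{\omega_R}$, i.e.\ $\homology^{n}(\tot\Gamma)\cong\homology^{n}({\bf y};\Hom{M}{\omega_R})\cong\homology_{\ell-n}({\bf y};\Hom{M}{\omega_R})$. The other spectral sequence then reads
\[
E_{2}^{p,q}=\Ext{q}{\homology_p({\bf y};M)}{\omega_R}\ \Rightarrow\ \homology_{\ell-(p+q)}({\bf y};\Hom{M}{\omega_R}),
\]
with $E_{2}^{p,q}=0$ for $q<g$ (since $I:=({\bf y})$ annihilates $\homology_p({\bf y};M)$ and has grade $g$ on $\omega_R$, $\omega_R$ being maximal Cohen--Macaulay) and for $q>d:=\dim R$ (since $\omega_R$ has injective dimension $d$). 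This is exactly parallel to Construction~\ref{rmk:spectralsequence}.

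From here I would copy the proof of Theorem~\ref{thm:genduality} step for step. Let $\psi_i$ be the edge homomorphism relating $\homology_{\ell-g-i}({\bf y};\Hom{M}{\omega_R})$ and $\Ext{g}{\homology_i({\bf y};M)}{\omega_R}$, and apply the functor $\Ext{g}{-}{\omega_R}$ to it. Choosing a maximal $R$-regular sequence ${\bf x}$ in $I$ — which is automatically $\omega_R$-regular, $\omega_R$ being maximal Cohen--Macaulay — one rewrites $\Ext{g}{N}{\omega_R}\cong\Hom[R/({\bf x})]{N}{\omega_R/{\bf x}\omega_R}$ for any $R/I$-module $N$, where $\omega_R/{\bf x}\omega_R$ is the canonical module of the Cohen--Macaulay ring $R/({\bf x})$; since $\Hom$ into a canonical module satisfies Serre's condition $S_2$, both sides of the claimed isomorphism satisfy $S_2$ as $R/I$-modules. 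By the Fact from the introduction it therefore suffices to prove that $\Ext{g}{\psi_i}{\omega_R}$ becomes an isomorphism after localizing at primes of $R/I$ of height at most $1$; all the hypotheses localize (the localization of $\omega_R$ is the canonical module of $R_{\p}$, $M_{\p}$ is again maximal Cohen--Macaulay, and $\Hom{M}{\omega_R}$ localizes to $\Hom[R_{\p}]{M_{\p}}{\omega_{R_{\p}}}$), so we reduce to the case where $R$ is local and $\dim R/I\le 1$.

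If $\dim R/I=0$, i.e.\ $d=g$, the spectral sequence has the single nonzero row $q=g$, so $\psi_i$ is itself an isomorphism, and hence so is $\Ext{g}{\psi_i}{\omega_R}$. If $\dim R/I=1$, i.e.\ $d=g+1$, then there are at most the two nonzero rows $q=g$ and $q=g+1$, so (exactly as in the proof of Theorem~\ref{thm:genduality}, where the source of $d_2$ has finite length and its target has no nonzero finite-length submodule) one has $E_2=E_\infty$, and the two-step filtration of the abutment yields a short exact sequence
\[
0\to L\to\homology_{\ell-g-i}({\bf y};\Hom{M}{\omega_R})\xrightarrow{\psi_i}\Ext{g}{\homology_i({\bf y};M)}{\omega_R}\to0,
\]
where $L=\Ext{d}{\homology_i({\bf y};M)}{\omega_R}$ has finite length. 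Applying $\Ext{g}{-}{\omega_R}$ and using that $\Ext{j}{L}{\omega_R}=0$ for all $j<d$ (the module $L$ has a finite filtration by copies of the residue field, and $\depth\omega_R=d$), the long exact sequence collapses to the desired isomorphism, induced by $\Ext{g}{\psi_i}{\omega_R}$.

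The bulk of the proof is thus a transcription of the argument in Section~\ref{sec:PD}. The steps that genuinely require the new hypotheses — and where I expect the care to be needed — are the collapse of the $q$-direction spectral sequence together with the identification of $\homology^{n}(\tot\Gamma)$ as Koszul homology of $\Hom{M}{\omega_R}$, which is where $M$ being maximal Cohen--Macaulay enters (via $\Ext{q}{M}{\omega_R}=0$ for $q>0$), and the replacement of the facts that are elementary when $R$ is Gorenstein by their analogues for $\omega_R$: a maximal regular sequence in $I$ remains $\omega_R$-regular, $\omega_R/{\bf x}\omega_R$ is again a canonical module, $\Hom$ into a canonical module satisfies $S_2$, and $\Ext{j}{-}{\omega_R}$ vanishes below degree $d$ on finite-length modules. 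One should also verify that the localization used in the $S_2$-reduction preserves all of these properties. None of this is deep, but it is the part of the argument where the proof of Theorem~\ref{thm:genduality} has to be adapted rather than merely copied.
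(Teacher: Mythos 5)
Your proposal is correct and follows essentially the same route as the paper, which itself proves Theorem~\ref{thm:genduality-module} by running the proof of Theorem~\ref{thm:genduality} on the double complex $\Hom{\koszul\otimes_R M}{\textrm J}$ with ${\textrm J}$ an injective resolution of $\omega_R$, using exactly the modifications you identify (collapse via $\Ext{q}{\koszul_p\otimes_R M}{\omega_R}=0$ for $q>0$, identification of the abutment with $\homology_{\ell-n}({\bf y};\Hom{M}{\omega_R})$, and the bounds $g\le q\le d$ from $\omega_R$ being maximal Cohen--Macaulay of injective dimension $d$). The only quibble is your parenthetical justification of $E_2=E_\infty$ in the one-dimensional case via finite-length considerations; the correct (and sufficient) reason, which you also state, is simply that the spectral sequence has only two adjacent nonzero rows.
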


\begin{proof}
  The proof is the same as that of Theorem~\ref{thm:genduality} with
  the following minor modifications.  Consider instead the double
  complex ${\textrm C}=\Hom{\koszul \otimes_R M}{\textrm J}$, where
  $\koszul$ is the Koszul complex $\koszul({\bf y})$ and ${\textrm J}$
  is an injective resolution of $\omega_R$, and the two spectral
  sequences associated to it.  Taking homology first in the vertical
  $q$ direction, one again obtains a collapsing spectral sequence as
  each $\koszul_p\otimes_R M$ is maximal Cohen-Macaulay and hence
  ${}^{I}E^{p,q}_{1} \cong \Ext{q}{\koszul_p\otimes_R M}{\omega_R}$
  vanishes for all $q>0$.  This implies that ${}^{I}E^{p,0}_{2}$ is
  the $p$th homology of the total complex ${\textrm C}$.  So, one
  can see that
\begin{align*}
\homology^n(\tot \Gamma)
&\cong \homology_n(\Hom{\koszul\otimes_R M}{\omega_R}) \\
&\cong \homology_n(\Hom{\koszul}{\Hom{M}{\omega_R}}) \\
&\cong \homology_{\ell -n}({\bf y},\Hom{M}{\omega_R}).
\end{align*}

In the other direction one obtains a convergent spectral sequence of the form
\begin{eqnarray*}
\label{e3}
{}^{II}E_{2}^{p,q}=\Ext{q}{\homology_p({\bf y};M)}{\omega_R} \Rightarrow \homology_{\ell-(p+q)}({\bf y};\Hom{M}{\omega_R}).
\end{eqnarray*}

Now the same argument as in Theorem~\ref{thm:genduality} applied to
this spectral sequence yields the desired result since $\omega_R$ has
injective dimension $d$ and is a maximal Cohen-Macaulay module, hence
the longest $\omega_R$-sequence in the ideal $({\bf y})$ has length $g$.
\end{proof}

\section{Consequences and related results}
\label{sec:coros}

In this section we first give extensions of two classical results, the
first of Herzog from \cite{H} and the second of Huneke from
\cite{Hu}, to the case of sequences of elements. We need these more
general versions for the theorems in this section and the next.

Then, in the main theorem of the section, by a careful analysis of the
spectral sequence from Construction~\ref{rmk:spectralsequence}, we
show that the Koszul homologies are Cohen-Macaulay if they satisfy Serre's
conditions $S_h$ for $h$ at least half their dimension and at least 2. 
The result extends a theorem of Hartshorne and Ogus in \cite{HO} 
from $R/I$ to all the Koszul homology modules. 

We begin by proving a version of the result of Herzog from \cite{H} for arbitrary sequences of elements. 

\begin{prp}\label{prp:herzog}
  Let $R$ be a Gorenstein ring and $h$ be a nonnegative integer. Let ${\bf y}$ be
  a sequence of $\ell$ elements that generate an ideal of
  grade $g$. If the module $\homology _i({\bf y})$ is  Cohen-Macaulay for all
  $i\leq h$, then there is an isomorphism
\[
\Ext{g}{\homology_i({\bf y})}{R}\cong\homology_{\ell-g-i}({\bf y})
{\textrm{ for each }}
i\leq h+1.
\]
In particular, $\homology_{\ell-g-i}({\bf y})$ is Cohen-Macaulay for every $i\leq
h$. Moreover, the following  hold
\begin{align*}
\ann {\homology_i({\bf y})}&=\ann{ \homology_{\ell-g-i}({\bf y})}  {\textrm{ for }} 0\leq i \leq h, \, \textrm{and} \\
\ann {\homology_{h+1}({\bf y})}&\subseteq \ann{ \homology_{\ell-g-(h+1)}({\bf y})}.
\end{align*}
\end{prp}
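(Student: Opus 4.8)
The plan is to run the spectral sequence from Construction~\ref{rmk:spectralsequence} and exploit the Cohen-Macaulay hypothesis to control the rows where $E_2^{p,q}$ can be nonzero. Write $I=({\bf y})$, $d=\dim R$, and recall that $E_2^{p,q}=\Ext{q}{\homology_p({\bf y})}{R}$ is concentrated in the range $g\le q\le d$, and that it converges to $\homology_{\ell-(p+q)}({\bf y})$. The key input is that for a Cohen-Macaulay $R/I$-module $N$ of dimension $d-g$ over the Gorenstein ring $R$, the module $\Ext{q}{N}{R}$ vanishes for $q\ne g$ and $\Ext{g}{N}{R}$ is again Cohen-Macaulay of dimension $d-g$ (and $\Ext{g}{\Ext{g}{N}{R}}{R}\cong N$, with the same annihilator). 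So under the hypothesis that $\homology_i({\bf y})$ is Cohen-Macaulay for all $i\le h$, the columns $p=0,\dots,h$ of the $E_2$-page collapse to the single row $q=g$.

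First I would treat the columns $p\le h$. Fix $i\le h+1$ and look at how $\homology_{\ell-g-i}({\bf y})=\homology_{\ell-(i+g)}({\bf y})$ is assembled from the diagonal $p+q=i+g$. The only potentially nonzero entries on that diagonal are $E_\infty^{i,g}$ (surviving from $E_2^{i,g}=\Ext{g}{\homology_i({\bf y})}{R}$) together with entries $E_\infty^{p,q}$ with $p<i$, $q>g$, $p+q=i+g$; but every such $p$ satisfies $p\le h$, so the corresponding column has already collapsed and $E_2^{p,q}=0$ for $q>g$. Hence $E_\infty^{i,g}$ is the only nonzero term on that diagonal, giving $\homology_{\ell-g-i}({\bf y})\cong E_\infty^{i,g}$. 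It remains to see that no differentials hit or leave the spot $(i,g)$. Outgoing differentials $d_r\colon E_r^{i,g}\to E_r^{i-r,g+r-1}$ land in a column $i-r<i\le h+1$; if $i-r\le h$ that column is collapsed to row $g$, so the target is zero (for $r\ge 2$ the target row $g+r-1>g$); the borderline case $i-r=h+1$ is impossible since $r\ge 2$. Incoming differentials $d_r\colon E_r^{i+r,g-r+1}\to E_r^{i,g}$ originate in row $g-r+1<g$, which is zero. Therefore $E_2^{i,g}=E_\infty^{i,g}$, and this isomorphism $\Ext{g}{\homology_i({\bf y})}{R}\cong\homology_{\ell-g-i}({\bf y})$ is exactly the edge homomorphism $\psi_i$ (cf.\ Remark~\ref{rmk:edge}). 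Since $\homology_i({\bf y})$ is Cohen-Macaulay for $i\le h$, so is $\Ext{g}{\homology_i({\bf y})}{R}$, hence $\homology_{\ell-g-i}({\bf y})$ is Cohen-Macaulay for $i\le h$, and it has the same annihilator as $\homology_i({\bf y})$ because for Cohen-Macaulay $N$ the annihilators of $N$ and $\Ext{g}{N}{R}$ coincide (both are $R/I$-faithful of dimension $d-g$, and $\Ext{g}{-}{R}$ is a duality; more directly, $\ann N\subseteq\ann{\Ext{g}{N}{R}}$ always, and applying $\Ext{g}{-}{R}$ again reverses the inclusion). This yields the displayed equalities of annihilators for $0\le i\le h$.

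For the final, weaker statement about $\homology_{h+1}({\bf y})$: here we only know $\homology_i({\bf y})$ is Cohen-Macaulay for $i\le h$, not for $i=h+1$, so $\Ext{q}{\homology_{h+1}({\bf y})}{R}$ may be nonzero for $q>g$. The edge map gives a natural map $\psi_{h+1}\colon\homology_{\ell-g-(h+1)}({\bf y})\to E_2^{h+1,g}=\Ext{g}{\homology_{h+1}({\bf y})}{R}$, and by the first part (applied with $i=h+1$, where the source-side analysis still worked) $\psi_{h+1}$ is in fact an isomorphism; in any case $\homology_{\ell-g-(h+1)}({\bf y})$ is a subquotient of $\Ext{g}{\homology_{h+1}({\bf y})}{R}$, whence $\ann{\homology_{h+1}({\bf y})}=\ann{\Ext{g}{\homology_{h+1}({\bf y})}{R}}\subseteq\ann{\homology_{\ell-g-(h+1)}({\bf y})}$. (The first equality again uses $\ann N\subseteq\ann{\Ext{g}{N}{R}}$ together with the standard fact that, over a Gorenstein ring, $\Ext{g}{N}{R}$ and $N$ have the same support, hence the same radical annihilator; for the annihilator itself one uses that $\Ext{g}{-}{R}=\PHom_{R/I}(-,R/({\bf x}))$ kills no more than $I$-torsion.)

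The main obstacle is the careful bookkeeping on the $E_2$-page: one must verify that for $i\le h+1$ the diagonal $p+q=i+g$ has a unique surviving term and that all relevant differentials vanish because their sources or targets lie in already-collapsed columns or in the zero rows $q<g$, $q>d$. The annihilator statements are then formal consequences of the duality $\Ext{g}{-}{R}$ on Cohen-Macaulay $R/I$-modules over a Gorenstein ring, handled exactly as in the proof of Theorem~\ref{thm:genduality} and Remark~\ref{rmk:ExtgHom}.
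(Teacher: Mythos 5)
Your proposal is correct and takes essentially the same approach as the paper's proof: the same spectral sequence from Construction~\ref{rmk:spectralsequence}, the collapse of the columns $p\le h$ to the single row $q=g$, the identification $E_2^{i,g}=E_\infty^{i,g}\cong\homology_{\ell-g-i}({\bf y})$ via the edge homomorphism for $i\le h+1$, and biduality of $\Ext{g}{-}{R}$ on Cohen--Macaulay modules for the annihilator equalities. Two cosmetic remarks: your differential indexing $d_r\colon E_r^{p,q}\to E_r^{p-r,q+r-1}$ is off by one from the paper's $d_r\colon E_r^{p,q}\to E_r^{p-r+1,q+r}$ (the vanishing of all relevant sources and targets holds under either convention), and for the final inclusion the trivial containment $\ann{N}\subseteq\ann{\Ext{g}{N}{R}}$ together with the $i=h+1$ isomorphism already suffices, so the closing discussion of supports and torsion is unnecessary.
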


\begin{proof}
  Set $\homology_i=\homology_i({\bf y})$ for all $i \geq 0$. Consider
  the spectral sequence
\begin{align*}
E_{2}^{p,q}=\Ext{q}{\homology_p}{R} \Rightarrow \homology_{\ell-(p+q)}
\end{align*}
from Construction~\ref{rmk:spectralsequence}.
As before, one has that $E_{2}^{p,q}=0$ for all $q<g$, but now the Cohen-Macaulay
hypothesis implies that $E_{2}^{p,q}=0$ when $p\leq h$ and $q>g$ as well.
So one has that
  $E^{p,g}_{2}=E^{p,g}_{3}=\dots=E_{\infty}^{p,g}$
  for all $p\leq h+1$. Therefore, by renaming the index $p$ by $i$ one gets
  \[
  E_{\infty}^{p,g}=\Ext{g}{\homology_i}{R} \cong
  \homology_{g+i}(\tot {\textrm C})=\homology^{g+i}
  \cong\homology_{\ell-g-i},
\]
for $i=0, \dots, h+1$, and so the first part of thesis holds.  Since
$R$ is Gorenstein and $\homology_i$ is Cohen-Macaulay for $i \leq
h$, the module $\Ext{g}{\homology_i}{R}$ is Cohen-Macaulay for each
$i \leq h$. For the remaining assertions, notice that for every $i \leq h+1$ one has
\[
\ann {\homology_i} \subseteq \ann {\Ext{g}{\homology_i}{R}}=\ann
{\homology_{\ell-g-i}}.
\]
For each $i \leq h$ the other inclusion follows from the following
isomorphisms
\[
\homology_{i} \cong \Ext{g}{\Ext{g}{\homology_i}{R}}{R} \cong
\Ext{g}{\homology_{\ell-g-i}}{R},
\]
where the first one is  \cite[Theorem 3.3.10]{bruns-herzog}.
\end{proof}

\begin{rmk}\label{rmk:edgeiso}The isomorphisms in the previous
  proposition are given by the edge homomorphism of the spectral sequence
  from Construction~\ref{rmk:spectralsequence}.
\end{rmk}

A surprising fact is that the Koszul homology algebra is Poincar\'e under the much
weaker assumption of reflexivity on the Koszul homology modules,
rather than the full hypothesis of Cohen-Macaulayness. This fact was
noted by Huneke in \cite[Prop~2.7]{Hu} for ideals. Below we use
Theorem~\ref{thm:genduality} to obtain a version, modulo the
identifications from Remark~\ref{rmk:ExtgHom}, that works for a {\em single} value of $i$ 
and any sequence of elements.

\begin{prp}
\label{prp:S2}
Let $R$ be a Gorenstein ring and ${\bf y}$ be a sequence of $\ell$
elements that generate an ideal of grade $g$.  Assume that for some
integer $i$ the Koszul homology $\homology_i({\bf y})$ satisfies
Serre's condition $S_2$ as an $R/({\bf y})$-module.  
Then the edge homomorphism from the spectral sequence of
Construction~\ref{rmk:spectralsequence} gives an isomorphism
$\homology_{i}({\bf y}) \cong \Ext{g}{\homology_{\ell-g-i}({\bf y})}{R}$. 
\end{prp}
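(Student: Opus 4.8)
The plan is to run the spectral sequence of Construction~\ref{rmk:spectralsequence} and reduce, via the Fact from the introduction, to checking the claimed isomorphism in codimension $\le 1$ over $R/I$, where $I=({\bf y})$. First I would record that the edge homomorphism $\psi_{\ell-g-i}\colon \homology_i({\bf y}) \to \Ext{g}{\homology_{\ell-g-i}({\bf y})}{R}$ exists for formal reasons (it is the edge map in the row $q=g$, which is the lowest nonzero row). Both source and target are $R/I$-modules that can be rewritten using Remark~\ref{rmk:ExtgHom} in the form $\PHom_{R/I}(-,R/({\bf x}))$ for an $R$-regular sequence ${\bf x}$ of length $g$ in $I$: the source directly, and the target because $\Ext{g}{\homology_{\ell-g-i}({\bf y})}{R}\cong\Hom{\homology_{\ell-g-i}({\bf y})}{R/({\bf x})}$ once we note $\homology_{\ell-g-i}({\bf y})$ is an $R/I$-module. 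Hence both satisfy $S_2$ over $R/I$ (being duals into the Cohen--Macaulay, hence $S_2$, module $R/({\bf x})$), so by the Fact it suffices to prove that $\psi_{\ell-g-i}$ becomes an isomorphism after localizing at any prime $\mathfrak p$ of $R$ whose image in $R/I$ has height $\le 1$. Localization commutes with the construction of the spectral sequence and with Koszul homology, so we are reduced to the case $\dim R/I \le 1$, i.e. $d-g\le 1$ where $d=\dim R$.

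Next I would dispose of the two small cases exactly as in the proof of Theorem~\ref{thm:genduality}. If $\dim R/I = 0$, i.e. $d=g$, then $E_2^{p,q}=0$ unless $q=g$, the spectral sequence collapses to the single row $q=g$, and the edge homomorphism $\psi_{\ell-g-i}$ is an isomorphism with no further hypothesis. If $\dim R/I = 1$, i.e. $d=g+1$, there are at most two nonzero rows $q=g$ and $q=g+1$, so $E_2=E_\infty$ and we get the short exact sequence
\begin{align*}
0 \to \Ext{d}{\homology_i({\bf y})}{R} \to \homology_{\ell-g-i}({\bf y}) \xrightarrow{\psi_i} \Ext{g}{\homology_i({\bf y})}{R} \to 0,
\end{align*}
together with the dual-side edge sequence identifying the $q=g$ piece of $\homology^{g+i}(\tot\Gamma)=\homology_{\ell-g-i}({\bf y})$. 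Here I would use the hypothesis that $\homology_i({\bf y})$ satisfies $S_2$ as an $R/I$-module: since $\dim R/I = 1$ this forces $\homology_i({\bf y})$ to be Cohen--Macaulay of dimension $1$ (or zero), hence $\depth\homology_i({\bf y})\ge \min\{1,\dim\}$ at every prime, and in particular the finite-length submodule $\Ext{d}{\homology_i({\bf y})}{R}$ — which is supported only at maximal ideals of $R/I$ — must vanish because $\homology_i({\bf y})$ has no embedded (finite-length) quotient obstruction; more precisely $\Ext{d}{\homology_i({\bf y})}{R}\cong \Hom{k}{\text{(local dual)}}$ picks out a depth-$0$ summand which $S_2$ rules out. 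Thus $\psi_i$ is already an isomorphism. I would then dualize once more: applying $\Ext{g}{-}{R}$ to $\psi_i$ (now an isomorphism) and composing with the biduality isomorphism $\homology_i({\bf y})\cong\Ext{g}{\Ext{g}{\homology_i({\bf y})}{R}}{R}$ valid for Cohen--Macaulay modules of dimension $d-g$ over the Gorenstein $R$ (\cite[Theorem 3.3.10]{bruns-herzog}), gives precisely $\homology_i({\bf y})\cong\Ext{g}{\homology_{\ell-g-i}({\bf y})}{R}$, and one checks this composite is the edge homomorphism.

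The main obstacle I anticipate is the bookkeeping around which edge homomorphism is being asserted to be an isomorphism and making sure the map produced by the codimension-$\le 1$ analysis is literally the global edge map, not merely an abstract isomorphism — one needs the naturality of the edge homomorphisms under localization, which holds but must be invoked cleanly. A secondary subtlety is confirming that $S_2$ over $R/I$ in dimension $\le 1$ genuinely kills the finite-length term $\Ext{d}{\homology_i({\bf y})}{R}$: this is where the proof of the present proposition diverges from that of Theorem~\ref{thm:genduality}, since there one had the freedom of a second dualization to absorb that term, whereas here we must use the hypothesis directly. Once these points are pinned down the argument is a routine specialization of the proof of Theorem~\ref{thm:genduality}.
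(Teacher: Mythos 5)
Your overall strategy --- use the Fact to reduce to checking the edge map in codimension at most $1$ over $R/({\bf y})$, then read off the two-row spectral sequence --- is viable and genuinely different from the paper's argument, but the key step in the case $\dim R/({\bf y})=1$ is wrong as written. You analyze the edge map $\psi_i\colon \homology_{\ell-g-i}({\bf y})\to\Ext{g}{\homology_i({\bf y})}{R}$ and claim its kernel is killed by the $S_2$ hypothesis on $\homology_i({\bf y})$. That kernel, however, is a finite-length submodule of $\homology_{\ell-g-i}({\bf y})$, namely $E_\infty^{i-1,d}=\Ext{d}{\homology_{i-1}({\bf y})}{R}$ (note the index: the displayed sequence in the paper's proof of Theorem~\ref{thm:genduality} is off by one here, which is harmless there but not for you). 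The hypothesis gives you no control over $\homology_{\ell-g-i}({\bf y})$ or over $\homology_{i-1}({\bf y})$, so the vanishing $\Ext{d}{\homology_i({\bf y})}{R}=0$ that you correctly deduce from $\depth\homology_i({\bf y})\ge 1$ is a statement about the wrong module: it does not show that $\psi_i$ is injective. Consequently the subsequent dualize-and-bidualize step rests on an unproved claim.

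The fix stays inside your own framework: work with the edge map of the Proposition itself, $\psi\colon\homology_i({\bf y})\to\Ext{g}{\homology_{\ell-g-i}({\bf y})}{R}$. In the two-row case no differential touches the corner $E_r^{\ell-g-i,g}$, so $\psi$ is surjective, and its kernel is $E_\infty^{\ell-g-i-1,d}=\Ext{d}{\homology_{\ell-g-i-1}({\bf y})}{R}$, a finite-length submodule of $\homology_i({\bf y})$ itself; since $S_2$ forces $\depth\homology_i({\bf y})\ge 1$ when $\dim R/({\bf y})=1$ and the module is nonzero, this submodule vanishes and $\psi$ is an isomorphism, with no biduality needed. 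For comparison, the paper proves the proposition by a different route: it applies Theorem~\ref{thm:genduality} globally to conclude that $\psi^\vee$ is an isomorphism, then dualizes once more and uses that both $\homology_i({\bf y})$ and $\Ext{g}{\homology_{\ell-g-i}({\bf y})}{R}\cong\Hom{\homology_{\ell-g-i}({\bf y})}{R/({\bf x})}$ are reflexive for $(-)^\vee=\Ext{g}{-}{R}$, being $S_2$ over the Gorenstein quotient $R/({\bf x})$, so that $\psi$ is identified with the isomorphism $\psi^{\vee\vee}$ through the natural double-dual maps. Your route, once corrected as above, is a legitimate and arguably more self-contained alternative.
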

\begin{proof}
Denote the dual $\Ext{g}{-}{R}$ by $(-)^\vee$. Note that for $R/({\bf y})$-modules, one has $(-)^\vee \cong \Hom{-}{R/({\bf x})}$, 
where ${\bf x}$ is any $R$-regular sequence of length $g$ in $({\bf y})$.
By Theorem~\ref{thm:genduality} and Remark~\ref{rmk:edge}, the dual $\psi^\vee$ 
of the edge homomorphism 
$\psi\colon\homology_{i}({\bf y}) \to  \homology_{\ell-g-i}({\bf y})^\vee$ 
is an isomorphism. 
Dualizing once more one obtains a commutative diagram 
  \begin{equation*}
  \xymatrix{
   \homology_{i}({\bf y})  \ar@{->}^{\!\!\!\!\!\!\!\!\psi}[r] \ar@{->}[d]
      & \homology_{\ell-g-i}({\bf y})^\vee \ar@{->}[d] \\
    \homology_{i}({\bf y})^{\vee\vee} \ar@{->}^{\!\!\!\!\!\!\psi^{\vee\vee}}_{\!\!\!\!\!\!\cong}[r] 
      & \  \homology_{\ell-g-i}({\bf y})^{\vee\vee\vee}
  }
  \end{equation*}
whose bottom row is an isomorphism and vertical maps are the natural maps 
to the double dual. 
The desired conclusion follows from the fact that the vertical maps are isomorphisms. 
Indeed, since the modules in the top row satisfy $S_2$ also as 
$R/({\bf x})$-modules, the left one by hypothesis and the right one by identifying it with a Hom module as above, 
they are reflexive as $R$ is Gorenstein. 
\end{proof}

\begin{rmk}
\label{rmk:allS2}
An even simpler proof of the proposition above can be given if one knows that, for all $j$, the
Koszul homology $\homology_j({\bf y})$ satisfies $S_2$ as an $R/({\bf y})$-module 
(or even just for all $j\leq \ell-g-i$) along the lines
of Huneke's original proof.  By Proposition~\ref{prp:herzog} and
Remark~\ref{rmk:edgeiso}, the edge homomorphism $\psi$ is an
isomorphism when localized at primes of height at most 2 in $R/({\bf
  x})$, where ${\bf x}$ is any $R$-regular sequence of length $g$ in
$({\bf y})$.  As in the previous proof one sees that both the domain and
target modules satisfy $S_2$ over $R/({\bf x})$, and hence $\psi$ is
an isomorphism by the fact from the introduction.
\end{rmk}

\begin{rmk}
\label{rmk:equivalenttoS2}
The converse of Proposition~\ref{prp:S2} is true as well: If the isomorphism holds for
some $i$, then $\homology_{i}({\bf y})$ satisfies $S_2$ as an $R/({\bf
  y})$-module.  Indeed, the identification of the target as
$\Ext{g}{\homology_{\ell-g-i}({\bf y})}{R} \cong
\Hom{\homology_{\ell-g-i}({\bf y})}{R/{\bf x}}$ implies that this
module satisfies $S_2$ as an $R/({\bf x})$-module and hence as an
$R/({\bf y})$ module. 

Notice that this, together with Huneke's result \cite[Prop~2.7]{Hu}, implies 
that the following conditions are equivalent:
\begin{enumerate}
\item The ideal $I$ satisfies Poincar\'e duality, i.e., the maps from the 
introduction induced by the algebra structure on the Koszul complex are isomorphisms.
\item  The Koszul homologies of $I$ satisfy  $S_2$.
\item The edge homomorphisms from the spectral sequence are isomorphisms. 
\end{enumerate}
We do not know if the duality maps from  (1) and (3) are the same, 
but clearly they are isomorphisms at the same time.
\end{rmk}

The results in the rest of the paper come from further analysis of the
spectral sequence from Construction~\ref{rmk:spectralsequence} in
situations where the edge homomorphism is an isomorphism, for example 
as in Proposition~\ref{prp:S2} above. Indeed, in such circumstances
certain strong conditions are forced on the spectral sequence, as the
following key lemma shows.

\begin{lem}
\label{arrows}
Let $R$ be a Gorenstein ring and ${\bf y}$ be a sequence of $\ell$
elements that generate an ideal of grade $g$. Consider the spectral
sequence
\begin{align*}
E_{2}^{p,q}=\Ext{q}{\homology_p({\bf y})}{R} \Rightarrow \homology_{\ell-(p+q)}({\bf y}),
\end{align*}
from Construction~\ref{rmk:spectralsequence}. Fix an $i \geq 0$. If the edge
homomorphism $\homology_i({\bf y}) \to
\Ext{g}{\homology_{\ell-g-i}({\bf y})}{R}$ is an isomorphism, then

\begin{enumerate}
\item  the differential $d_r^{\ell-g-i,g}:E_r^{\ell-g-i,g} \to E_r^{\ell-g-i-r+1,g+r}$ is zero for all $r\geq 2$, and
\item the modules $E_{\infty}^{p,q}$ are zero for any $q>g$ with $p+q=\ell-i$.
\end{enumerate}
\end{lem}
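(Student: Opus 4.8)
The plan is to read off both conclusions from the way the edge homomorphism is built out of the filtration of the abutment. First I would fix notation: write $\homology_p=\homology_p({\bf y})$, set $d=\dim R$, and recall from Construction~\ref{rmk:spectralsequence} that $E_2^{p,q}=\Ext{q}{\homology_p}{R}$ vanishes for $q<g$ and, since $R$ is Gorenstein, for $q>d$; thus the spectral sequence converges and induces a finite decreasing filtration $F^{\bullet}$ on $\homology^n(\tot\Gamma)\is\homology_{\ell-n}({\bf y})$ with $F^g\homology^n(\tot\Gamma)=\homology^n(\tot\Gamma)$ and $F^q\homology^n(\tot\Gamma)/F^{q+1}\homology^n(\tot\Gamma)\is E_\infty^{n-q,q}$. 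Specializing to $n=\ell-i$ and writing $N=\homology^{\ell-i}(\tot\Gamma)\is\homology_i({\bf y})$, the bottom graded piece of the filtration of $N$ is $E_\infty^{\ell-g-i,g}$.

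Next I would record the shape of the lowest nonzero row $q=g$. Since the differentials of this spectral sequence run $d_r\colon E_r^{p,q}\to E_r^{p-r+1,\,q+r}$, any differential landing in $E_r^{p,g}$ would originate in row $g-r<g$ and hence vanishes; therefore $E_{r+1}^{p,g}$ is the kernel of $d_r^{p,g}$, so that $E_2^{p,g}\supseteq E_3^{p,g}\supseteq\cdots\supseteq E_\infty^{p,g}$. With this in place, the edge homomorphism of the statement is exactly the composite
\[
\homology_i({\bf y}) \is N \onto N/F^{g+1}N \is E_\infty^{\ell-g-i,g} \hookrightarrow E_2^{\ell-g-i,g}=\Ext{g}{\homology_{\ell-g-i}({\bf y})}{R},
\]
whose first map is the quotient by $F^{g+1}N$ and whose last map is the inclusion just obtained (this is the edge map $\psi$ appearing in the proof of Proposition~\ref{prp:S2}; cf.\ also Remark~\ref{rmk:edge}).

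Then I would feed the hypothesis into this factorization. If the composite is an isomorphism, then it is surjective and its image lies in $E_\infty^{\ell-g-i,g}$, which forces $E_\infty^{\ell-g-i,g}=E_2^{\ell-g-i,g}$; since the pages form the descending chain above, each inclusion $E_{r+1}^{\ell-g-i,g}\subseteq E_r^{\ell-g-i,g}$ is then an equality, i.e.\ $d_r^{\ell-g-i,g}=0$ for all $r\ge 2$ --- this is conclusion (1). The composite is also injective, so the quotient map $N\onto N/F^{g+1}N$ is injective and $F^{g+1}N=0$; hence for $q>g$ with $p+q=\ell-i$ we get $F^qN\subseteq F^{g+1}N=0$, so $E_\infty^{p,q}=F^qN/F^{q+1}N=0$ --- this is conclusion (2).

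I would not expect a genuinely hard step here: the argument is entirely bookkeeping about how the edge homomorphism decomposes through the filtration on the abutment. The points that need care are getting the direction of the differentials right for this row-filtered spectral sequence --- so that the lowest nonzero row $q=g$ carries only outgoing differentials, which is precisely what makes $E_\infty^{\ell-g-i,g}$ a submodule rather than a quotient of $E_2^{\ell-g-i,g}$ and lets one read off (1) --- and tracking the degree shift $\homology^n(\tot\Gamma)\is\homology_{\ell-n}({\bf y})$, so that $n=\ell-i$ is the relevant total degree for (2).
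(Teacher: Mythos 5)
Your proposal is correct and follows essentially the same route as the paper: both identify the edge homomorphism as the surjection $\homology_i({\bf y})\onto E_\infty^{\ell-g-i,g}$ followed by the inclusion $E_\infty^{\ell-g-i,g}\hookrightarrow E_2^{\ell-g-i,g}$, then read off (1) from the resulting equality $E_\infty^{\ell-g-i,g}=E_2^{\ell-g-i,g}$ and (2) from the vanishing of the rest of the filtration on the abutment. Your explicit justification that row $g$ receives no incoming differentials (so that the $E_r^{p,g}$ form a descending chain of submodules) is a detail the paper leaves implicit, and is correctly handled.
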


\begin{proof}
Set $\homology_i=\homology_i({\bf y})$ for all $i$.
The composition of the homomorphisms
\[
\homology_{i}\onto E_{\infty}^{\ell -g-i,g}\hookrightarrow
E_2^{\ell-g-i,g}=\Ext{g}{\homology_{\ell-g-i}}{R}
\]
is the edge homomorphism. Since it is
an isomorphism, we obtain that $E_{\infty}^{\ell -g-i,g}=
E_2^{\ell-g-i,g}=\Ext{g}{\homology_{\ell-g-i}}{R}$, which implies that 
$d_r^{\ell-g-i,g}=0$ for all $r\geq 2$.

Since the spectral sequence converges there is a
filtration $F_0 \subseteq F_1 \subseteq \dots \subseteq F_{d-g}$
such that $F_{d-g}\cong H_{i}$ and $F_{j+1}/F_j \cong
E_{\infty}^{p,q}$ with $p+q=\ell-i$ and $q=d-j$. 
As  the map $\homology_{i} \to E_{\infty}^{\ell-g-i,g}$ is an isomorphism, the filtration reduces to $F_j=0$
for all $j=0,\dots,d-g-1$ and therefore  $E_{\infty}^{p,q}=0 $ for $q>g$ with $p+q=\ell-i$.
\end{proof}

Now we are ready to state and prove an extension of a theorem of Hartshorne and Ogus from \cite{HO}.
Notice that by requiring the proper Serre's condition on all the Koszul homology
modules, one can relax them by one and also remove the factoriality hypotheses on $R/I$. 

\begin{thm}
\label{thm:dimR/2}
Let $R$ be a local Gorenstein  ring and ${\bf y}$ be a
sequence of $\ell$ elements which generate an ideal of grade $g$.  Let $h$ be
an integer such that $h \geq \max \{ 2, \frac{1}{2} \dim (R/({\bf y})
\}$.  
If, for all $i\geq 0$, the $R/({\bf y})$-module $\homology_i({\bf y})$ satisfies Serre's condition 
$S_h$, then  $\homology_i({\bf y})$ is Cohen-Macaulay for all $i=0,\dots,\ell-g$. 
\end{thm}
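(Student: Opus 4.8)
The plan is to induct on $i$ and use the spectral sequence of Construction~\ref{rmk:spectralsequence} to show that, one module at a time, the hypothesis $S_h$ on all Koszul homologies is actually enough to force full Cohen-Macaulayness. The key leverage is the following: since $h\ge 2$, each $\homology_j({\bf y})$ satisfies $S_2$, so by Proposition~\ref{prp:S2} (applied for every index) the edge homomorphisms $\homology_i({\bf y})\to\Ext{g}{\homology_{\ell-g-i}({\bf y})}{R}$ are all isomorphisms, and hence Lemma~\ref{arrows} applies for every $i$. This pins down a great deal of the spectral sequence: for each $i$, the differentials leaving $E_r^{\ell-g-i,g}$ vanish, and $E_\infty^{p,q}=0$ whenever $q>g$ and $p+q=\ell-i$.

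First I would set $D=\dim R/({\bf y})$, so $D\le 2h$, and recall that every $\homology_j=\homology_j({\bf y})$ has dimension $D$ (or is zero), with all the nonzero ones sitting in degrees $0,\dots,\ell-g$. Since $R$ is Gorenstein of dimension $d$ with $d-g=D$, the spectral sequence is concentrated in the band $g\le q\le d$, i.e.\ $q-g$ runs over $0,\dots,D$. Because $\homology_j$ satisfies $S_h$, local duality over the Gorenstein ring $R$ gives $\Ext{q}{\homology_j}{R}=0$ for $g\le q< g + (D - \operatorname{depth}\homology_j)$; concretely, $\Ext{q}{\homology_j}{R}$ can only be nonzero for $q=g$ (the Cohen-Macaulay part) or for $q$ in the top range $d-D+\min\{h,\dim\}\le$ ... — more precisely, the $S_h$ condition forces $\Ext{g+1}{\homology_j}{R}=\dots=\Ext{g+h-1}{\homology_j}{R}=0$ after possibly localizing, and over a local ring the relevant vanishing $\Ext{q}{\homology_j}{R}=0$ for $g<q<g+h$ holds outright when $D\le h$, and more generally the nonvanishing $\Ext{q}{}{}$ are confined to $q=g$ and $q\ge g+D-h+1$. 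The inequality $h\ge D/2$ is exactly what guarantees these two ranges $\{g\}$ and $\{q: q\ge g+D-h+1\}$, do not overlap except trivially and interact in a controlled way.

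The heart of the argument is then a diagonal-chasing bookkeeping. Fix $i$ and look at the anti-diagonal $p+q=\ell-i$ (so $p=\ell-i-q$, with $q=g$ corresponding to $p=\ell-g-i$). By Lemma~\ref{arrows}(2), $E_\infty^{p,q}=0$ for all $q>g$ on this diagonal. On the other hand $E_2^{p,q}=\Ext{q}{\homology_p}{R}$, and the only way $E_2^{p,q}$ can die by $E_\infty$ is via incoming or outgoing differentials $d_r$, which change $p$ by $-r+1$ and $q$ by $+r$, i.e.\ move between adjacent anti-diagonals $p+q=\ell-i$ and $p+q=\ell-i+1$. I would run this for decreasing $i$ (equivalently increasing $p$ along the top), using at each stage that the targets/sources of these differentials on neighboring diagonals have already been shown to vanish — this is where Lemma~\ref{arrows}(1), the vanishing of $d_r^{\ell-g-i,g}$, feeds in, since it prevents the $q=g$ spot from being hit or hitting. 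Combining the confinement of $\Ext{q}{\homology_p}{R}$ to $q=g$ and $q\ge g+D-h+1$ with the forced vanishing of $E_\infty^{p,q}$ for $q>g$, one deduces inductively that $\Ext{q}{\homology_p}{R}=0$ for all $q>g$ and all relevant $p$; that is, each $\homology_p$ has $\Ext{q}{\homology_p}{R}=0$ for $q\ne g$, which over the Gorenstein ring $R$ means $\homology_p$ is Cohen-Macaulay of dimension $D$.

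The main obstacle I anticipate is making the diagonal chase airtight near the "top" of the spectral sequence, i.e.\ controlling the differentials into and out of the spots with $q$ close to $d$, where both $\Ext{g}{}{}$-type and high-$\Ext{}{}{}$-type terms can a priori coexist; this is precisely the place where the numerical hypothesis $h\ge\frac12 D$ must be invoked to separate the "bottom row" $q=g$ from the "dangerous top band," and one has to check that no long differential $d_r$ with large $r$ can bridge them. A secondary technical point is that Proposition~\ref{prp:S2} and Lemma~\ref{arrows} are stated for a single $i$; I would first note that $S_h$ with $h\ge2$ gives $S_2$ for all indices, so these results apply simultaneously for every $i$, and then the uniform vanishing statements from Lemma~\ref{arrows} across all $i$ are what make the induction close. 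Once all $\homology_p({\bf y})$ are shown Cohen-Macaulay for $p=0,\dots,\ell-g$, we are done.
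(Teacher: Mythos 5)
There is a genuine gap: your plan is to conclude $E_2^{p,q}=\Ext{q}{\homology_p({\bf y})}{R}=0$ for all $q>g$ directly from $E_\infty^{p,q}=0$ by a diagonal chase, but this chase does not close. First, a bookkeeping error: over the Gorenstein local ring $R$ of dimension $d$, the condition $\depth\homology_j\geq h$ kills the \emph{high} Ext modules, so the possibly nonzero terms occupy the single contiguous band $g\le q\le d-h$, not $\{g\}$ together with a top band near $q=d$ as you assert; there are no two separated ranges for $h\ge\frac12\dim R/({\bf y})$ to keep apart. More seriously, for an interior term of this band the vanishing of $E_\infty^{p,q}$ only says that the incoming differentials are jointly surjective onto whatever the outgoing ones do not already detect; it does not force $E_2^{p,q}=0$. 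Concretely, take $\dim R/({\bf y})=6$ and $h=3$, so the band is $g\le q\le g+3$. The term $E_2^{p,g+1}$ receives nothing (sources lie below row $g$) and its only possibly nonzero outgoing differential is $d_2\colon E_2^{p,g+1}\to E_2^{p-1,g+3}$; combining $E_\infty^{p,g+1}=0$ with $E_\infty^{p-1,g+3}=0$ and Lemma~\ref{arrows}(1) yields only that this $d_2$ is an \emph{isomorphism} $\Ext{g+1}{\homology_p}{R}\cong\Ext{g+3}{\homology_{p-1}}{R}$, with no way to conclude that either module vanishes. Running the induction by diagonals does not help: processing diagonals in either order always leaves either the incoming or the outgoing differentials of a given spot unknown.

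The paper's proof is structured precisely to avoid this trap, and two of its essential ingredients are absent from your proposal. It first localizes and inducts on dimension so that $({\bf y})$ may be assumed strongly Cohen--Macaulay on the punctured spectrum; then it inducts on $i$ over symmetric pairs of \emph{columns}, using the already-established Cohen--Macaulayness of the outer columns (not of neighboring diagonals) to kill all differentials entering the top \emph{two} spots $(\ell-g-i-1,\,d-h)$ and $(\ell-g-i-1,\,d-h-1)$ of the next column, whose outgoing differentials land above the band. This yields only the improved bound $\depth\homology_{\ell-g-i-1}\ge h+2$, not full vanishing of the column. The hypothesis $h\ge\frac12\dim R/({\bf y})$ is then used not to separate bands in the spectral sequence but to get $\depth\homology_{i+1}+\depth\homology_{\ell-g-i-1}\ge 2h+2\ge\dim R/({\bf y})+2$, which is exactly the input needed for the Hartshorne--Ogus/Huneke depth criterion of Remark~\ref{rmk:HO} (applied over $S=R/({\bf x})$, together with Cohen--Macaulayness in codimension $\le 2$ from the punctured-spectrum induction) to conclude that $\homology_{i+1}$ and $\homology_{\ell-g-i-1}$ are Cohen--Macaulay. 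Without this external depth lemma and the localization induction, the spectral sequence alone cannot finish the argument.
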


\begin{proof}
Set $\homology_i = \homology_i({\bf y})$ for all $i$. Note that if
$\dim R/({\bf y}) \leq 2$ or $h\geq d-g$ there is nothing to prove,
so one may assume that $\dim R/({\bf y}) \geq 3$ and $h<d-g$.  Since
the conditions in the hypotheses localize, we may thus assume by
induction that the ideal $({\bf y})$ is strongly Cohen-Macaulay on
the punctured spectrum.

We use the spectral sequence
\begin{align*}
E_{2}^{p,q}=\Ext{q}{\homology_p}{R} \Rightarrow \homology_{\ell-(p+q)}
\end{align*}
from Construction~\ref{rmk:spectralsequence}.  As before, one has that
$E_{2}^{p,q}=0$ for $q<g$, but now the hypothesis that $\homology_i$
satisfies $S_h$ implies that $E_{2}^{p,q}=0$ holds for all $q>d-h$ as well.  We
use induction on $i$ to show that $\homology_{i}$ and
$\homology_{\ell-g-i}$ are Cohen-Macaulay.

Taking as a base case for the induction $i=-1$, one sees that it holds
trivially as $\homology_{-1}=0=\homology_{\ell-g+1}$.  For the
inductive step, we suppose that $\homology_{p}$ and
$\homology_{\ell-g-p}$ are Cohen-Macaulay for all $p\leq i$, and we
show that $\homology_{i+1}$ and $\homology_{\ell-g-(i+1)}$ are
Cohen-Macaulay.  The inductive hypothesis implies that $E_{2}^{p,q}=0$
whenever $q>g$ and either $p\leq i$ or $p\geq \ell-g-i$.

Serre's condition $S_h$ on the homology modules and
Proposition~\ref{prp:S2} imply that the modules $\homology_k$ and
$\Ext{g}{\homology_{\ell-g-k}}{R}$ are isomorphic via the edge
homomorphism for any $k\geq 0$. It follows from Lemma~\ref{arrows} that 
the differentials emanating from row $g$ on any page of the spectral
sequence are zero maps. Hence, the top two possibly nonzero modules
$E_{r}^{p,q}$ in column $\ell-g-(i+1)$, namely those with
$(p,q)=(\ell-g-i-1,d-h)$ and $(\ell-g-i-1,d-h-1)$ (or just the one
module $E_{r}^{p,q}$ with $(p,q)=(\ell-g-i-1,d-h)$ if $d-h-1=g$),
have all differentials coming into or out of them equal to zero on
any page of the spectral sequence. Therefore, for these values
of $(p,q)$ one has $E_{2}^{p,q}=E_{3}^{p,q}=\cdots=E_{\infty}^{p,q}$.
On the other hand, Lemma~\ref{arrows} also yields that
$E_{\infty}^{p,q}=0$ for any $q>g$. So one gets that
\begin{align*}
\Ext{d-h}{\homology_{\ell-g-i-1}}{R} = 0= \Ext{d-h-1}{\homology_{\ell-g-i-1}}{R}
\end{align*}
(or if $d-h-1=g$ just that $\Ext{d-h-1}{\homology_{\ell-g-i-1}}{R}=0$ 
and so $\homology_{\ell-g-i-1}$ is clearly Cohen-Macaulay). 
This yields the improved inequality $\depth
\homology_{\ell-g-i-1} \geq h+2$ and therefore 
\begin{align*}
  \depth \homology_{i+1} + \depth \homology_{\ell-g-i-1} \geq 2h+2
  \geq \dim (R/({\bf y})) +2.
\end{align*}
holds. 
Now consider the Koszul homologies as modules over the ring $S=R/({\bf x})$ 
where ${\bf x}$ is an $R$-regular sequence of length $g$ in the ideal $({\bf y})$,  
and note that that the supports of $R/({\bf y})$ and $R/({\bf x})$ are the same. 
By the remark below applied to $S$, the inequality above and the earlier 
assumption that $({\bf y})$ is strongly Cohen-Macaulay imply that 
$\homology_{i+1}$ and $\homology_{\ell-g-i-1}$ are Cohen-Macaulay as desired.
\end{proof}

\begin{rmk}
\label{rmk:HO}
Let $S$ be a Cohen-Macaulay ring with canonical module $\omega_S$. 
If an $S$-module $M$ and its dual $M^\vee =  \Hom[S]{M}{\omega_S}$ satisfy 
\[
\depth M_\p + \depth (M^\vee)_\p \geq \dim S_\p + 2
\]
for all prime ideals $\p$ in $S$ of height at least 3, and if 
$M_\p$ is Cohen-Macaulay for all prime ideals $\p$ in $S$ of height at most 2, 
then $M$ is Cohen-Macaulay.  
This can essentially be found in the paper \cite{HO} of Hartshorne and Ogus, 
but we give the version as stated and proved by Huneke in \cite[Lemma~5.8]{Hu}.

Theorem~\ref{thm:dimR/2} can also be compared to this result for 
$M= \homology_i({\bf y})$ and $S=R/({\bf x})$ 
where ${\bf x}$ is an $R$-regular sequence of length $g$ in the ideal $({\bf y})$. 
In fact, by Proposition~\ref{prp:S2} and Remark~\ref{rmk:ExtgHom} 
\[
\homology_{\ell-g-i}({\bf y})\cong \Hom[R/({\bf x})]{\homology_i({\bf y})}{R/({\bf x})} \cong \homology_{i}({\bf y})^\vee.
\]
The hypotheses of the theorem give the inequalities 
\[
\depth \homology_{i}({\bf y})_\p + \depth (\homology_{i}({\bf y})^\vee)_\p \geq \dim (R/({\bf x}))_\p 
\]
for all $i$ and still yield that all $\homology_i({\bf y})$ are Cohen-Macaulay. 
Notice that by requiring high enough depth for all the Koszul homologies, 
we can weaken hypotheses on the depth required to get a conclusion of Cohen-Macaulay. 
\end{rmk}

\section{Sliding depth}
\label{sec:SD}

The main result of the section is a generalization of
\cite[Theorem~1.4]{HVV}, in which the authors show that ideals satisfying 
sliding depth are strongly Cohen-Macaulay in the presence of  conditions 
limiting the number of local generators of the ideal.

\begin{dfn}
An ideal $I$ of $R$ is {\em strongly Cohen-Macaulay} if all the
non-vanishing Koszul homology modules are Cohen-Macaulay.
\end{dfn}

\begin{dfn}
\label{def:SDh}
An ideal $I$ is said to satisfy {\em $h$-sliding depth} or $SD_h$ if it satisfies
\begin{align*}
\depth \homology_{i}(I) \geq  \min\{ d-g,d-\ell+i+h \}.
\end{align*}
\end{dfn}

The condition $SD_h$ localizes, as was shown in \cite{HSV}, where it was introduced. 

\begin{rmk}
\label{depthZ}
Let ${\bf y}$ a sequence of $\ell$ elements, and set $\koszul_i=\koszul_i({\bf y})$ 
and $\homology_i=\homology_i({\bf y})$.  Let $Z_i$ and $B_i$ be the submodules
of $\koszul_i$ of cycles and boundaries, respectively. 
From the exact sequences
\begin{align*}
0 \to Z_{i+1} \to &\koszul_{i+1} \to B_{i} \to 0 \quad \text{and}\\
0 \to B_{i} \to& Z_{i} \to \homology_{i} \to 0
\end{align*}
one can easily verify that the following inequalities hold for all $i\geq 0$
\begin{align*}
\depth Z_i \geq \min \{ d,d-\ell+i+h+1  \}
\end{align*}
when $I$ satisfies $SD_h$. 
\end{rmk}

Given an ideal $I$, denote by $\mu(I)$ its minimal number of generators.
The following theorem is a generalization of the main result in
\cite[Theorem~1.4]{HVV}. Notice that for $j\geq 0$ condition $(a)$
below is equivalent to $\mu(I_{\p})\leq \max\{\height{I},
\height{\p}+j\}$; with this formulation the theorem in \cite{HVV}
corresponds to the case $j=-1$ in the theorem below (their assumption
of Cohen-Macaulayness for $R/I$ is not in fact necessary; see Remark~\ref{optimal}). 
In fact, the proof that we give works for the case $j=-1$ as well, 
and hence provides a shorter proof of their result.

\begin{thm}
\label{thm:SDh}
Let $R$ be a local Gorenstein ring, and let $I$ be an ideal of $R$.
Fix an integer $j \geq 0$.
Suppose that the following conditions hold
\begin{itemize}
\item[(a)] $\mu(I_{\p}) \leq \height{\p}+j$ for all prime ideals $\p \supseteq I$.
\item[(b)] $I$ satisfies $SD_h$ where $h=\lceil \frac{j+1}{2} \rceil$.
\item[(c)] $\homology_i(I)$ satisfies Serre's condition $S_2$ as an
  $R/I$-module for $i=0,\dots,\ell-g$.
\end{itemize}
\noindent Then $I$ is strongly Cohen-Macaulay.
\end{thm}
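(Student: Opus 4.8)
The plan is to run the same spectral-sequence analysis as in Theorem~\ref{thm:dimR/2}, but now keeping careful track of the vanishing of $E_2^{p,q}$ coming from sliding depth, and to combine this with Proposition~\ref{prp:S2} and Lemma~\ref{arrows}. First I would reduce, exactly as in the proof of Theorem~\ref{thm:dimR/2}, using that all the hypotheses $(a)$, $(b)$, $(c)$ localize: since $SD_h$ localizes (as noted after Definition~\ref{def:SDh}), and $(a)$ and $(c)$ obviously localize, by induction on $\dim R$ I may assume that $I$ is strongly Cohen-Macaulay on the punctured spectrum. The base cases $\dim R/I \le 1$ are handled directly: when $\dim R/I=0$ the spectral sequence collapses and all Koszul homologies have finite length, hence are Cohen-Macaulay; when $\dim R/I=1$, condition $(c)$ ($S_2$, hence $S_1$, for one-dimensional modules) forces $\depth \homology_i(I)\ge 1$, so they are again Cohen-Macaulay.

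For the inductive step I would work with the spectral sequence $E_2^{p,q}=\Ext{q}{\homology_p}{R}\Rightarrow \homology_{\ell-(p+q)}$. Two vanishing statements feed in. On one hand, $(c)$ together with Proposition~\ref{prp:S2} gives that the edge homomorphism $\homology_k\to \Ext{g}{\homology_{\ell-g-k}}{R}$ is an isomorphism for every $k$, so by Lemma~\ref{arrows} all differentials emanating from row $q=g$ vanish on every page, and $E_\infty^{p,q}=0$ for every $q>g$ with $p+q=\ell-i$ for any $i$. On the other hand, $(b)$, via the cycle-module estimate in Remark~\ref{depthZ} and the standard computation of $\depth\homology_p$ in terms of local cohomology (equivalently the dual Ext-vanishing), bounds how far down the rows with $q>g$ can be nonzero; together with $(a)$, which controls $\mu(I_\p)=\ell$ at the relevant primes and hence the width $\ell-g$ of the nonzero columns, this localizes the problematic entries to a narrow band. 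As in Theorem~\ref{thm:dimR/2}, the key point is that in each column $\ell-g-k$ the top one or two potentially nonzero entries $E_2^{p,q}$ with $q>g$ have all incoming and outgoing differentials zero (incoming because of the width bound from $(a)$ and the row-$g$ vanishing, outgoing by Lemma~\ref{arrows}), so $E_2^{p,q}=E_\infty^{p,q}=0$ there. This upgrades the depth of $\homology_{\ell-g-k}$ beyond what $SD_h$ alone gives, and the choice $h=\lceil\frac{j+1}{2}\rceil$ is exactly what makes the resulting pair of depth inequalities sum to at least $\dim(R/I)+2$; then Remark~\ref{rmk:HO}, applied over $S=R/({\bf x})$ with $M=\homology_i({\bf y})$ and $M^\vee\cong\homology_{\ell-g-i}({\bf y})$, concludes that the relevant homologies are Cohen-Macaulay, closing the induction.

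The main obstacle I anticipate is bookkeeping the exact positions of the nonzero $E_2^{p,q}$: one must verify that the sliding-depth bound $SD_h$ with $h=\lceil\frac{j+1}{2}\rceil$, combined with the constraint $\ell-g\le j$ coming from $(a)$ (at the maximal ideal, after the punctured-spectrum reduction), really does isolate at most two entries per column above row $g$ and kill all their differentials on every page, so that $E_2=E_\infty$ for those entries. The arithmetic linking $j$, $h$, $\dim R/I$, and the depths — showing $\depth\homology_{i+1}+\depth\homology_{\ell-g-i-1}\ge 2h+2\ge \dim(R/I)+2$ after the improvement — is where the precise value of $h$ must be checked, and where parity of $j$ (hence the ceiling) enters; everything else is a faithful rerun of the Theorem~\ref{thm:dimR/2} argument.
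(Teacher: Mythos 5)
Your plan follows the same route as the paper's own (first) proof: condition (c) with Proposition~\ref{prp:S2} and Lemma~\ref{arrows} kills the differentials leaving row $g$ and forces $E_\infty^{p,q}=0$ for $q>g$ on the relevant antidiagonals, the punctured-spectrum reduction and the induction on $i$ empty the outer columns above row $g$, the $SD_h$ vanishing isolates one critical entry whose vanishing at $E_2$ improves a depth estimate by $1$, and Remark~\ref{rmk:HO} closes the induction. The strategy is therefore correct, but the two quantitative claims you do commit to are imported from Theorem~\ref{thm:dimR/2} and are wrong here. Condition (a) at $\p=\m$ gives $\ell\le d+j$ (i.e.\ $\ell-d\le j$), not $\ell-g\le j$; and the improved depths are not each at least $h+1$, so the sum is not ``$2h+2$''. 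The correct count is
\[
\depth \homology_{\ell-g-i}+\depth\homology_{i}\ \ge\ \bigl(d-\ell+(\ell-g-i)+h+1\bigr)+\bigl(d-\ell+i+h\bigr)\ =\ (d-g)+(d-\ell)+2h+1,
\]
which is $\ge \dim R/I+2$ because $2h+1\ge j+2$ and $d-\ell\ge -j$; this is precisely where $h=\lceil\tfrac{j+1}{2}\rceil$ and (a) enter, and it does not require $h\ge\tfrac12\dim R/I$. Two smaller repairs: the outgoing differentials from the critical entry $E_r^{p_0,q_0}$ with $p_0=\ell-g-i$, $q_0=g+i-h$ are killed not by Lemma~\ref{arrows} (which only controls row $g$ and the $E_\infty$ terms) but by the $SD_h$ vanishing of all $E_2^{p,q}$ strictly above the slope $-1$ line $q=\ell-p-h$, since the targets have $p+q$ increased by $1$; and the incoming ones die by the inductive hypothesis on columns $p>\ell-g-i$, not by a ``width bound from (a)''. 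Also take $\dim R/I\le 2$ (not just $\le 1$) as the base case, which is immediate from (c). With these corrections your sketch becomes the paper's argument; note the paper also gives a second, spectral-sequence-free proof via the cycle/boundary sequences of Remark~\ref{depthZ}.
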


\begin{rmk}
\label{optimal}
Notice that conditions (a) and (b) give that the homology modules
$\homology_i(I)$ automatically satisfy Serre's condition $S_2$ for
all $i\geq \lfloor \frac{j+1}{2} \rfloor +1$. Indeed, by \cite[\S
6(ii)]{HVV} the condition $SD_h$ localizes. This implies that
\[
\depth \homology_i(I_\p) \geq \dim R_\p - \mu(I_\p) + i + h \geq
-j+i+h = -h^\prime + i + 1
\]
for $h^\prime = \lfloor \frac{j+1}{2} \rfloor $ since
$h^\prime+h=j+1$.  For example, for $j=0$ (and $j=-1$), one gets that
$\homology_i(I)$ satisfies $S_2$ for all $i>0$, and hence
Theorem~\ref{thm:SDh} immediately yields Corollary~\ref{SDhj=0} below.
Similarly, for $j=1$, only $\homology_0(I)$ and $\homology_1(I)$ do
not automatically satisfy $S_2$ from the sliding depth condition, and
hence one obtains Corollary~\ref{SDhj=1} below.
\end{rmk}

\begin{cor}
\label{SDhj=0}
Let $R$ be a local Gorenstein ring, and let $I$ be an ideal of $R$
such that $R/I$ satisfies Serre's condition $S_2$. Suppose the
following conditions hold
\begin{itemize}
\item[(a)] $\mu(I_{\p}) \leq \height{\p}$ for all prime ideals $\p \supseteq I$
and
\item[(b)] $I$ satisfies $SD_1$.
\end{itemize}
\noindent Then $I$ is strongly Cohen-Macaulay.
\end{cor}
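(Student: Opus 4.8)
The plan is to obtain this as the special case $j=0$ of Theorem~\ref{thm:SDh}, the only thing that needs checking being that hypothesis (c) of that theorem holds under the weaker assumptions here. First I would note that for $j=0$ one has $h=\lceil\frac{j+1}{2}\rceil=1$, so condition (b) of the corollary (that $I$ satisfies $SD_1$) is exactly condition (b) of Theorem~\ref{thm:SDh}, and condition (a) of the corollary ($\mu(I_\p)\le\height{\p}$ for all primes $\p\supseteq I$) is condition (a) of Theorem~\ref{thm:SDh} with $j=0$.

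Next I would invoke the bookkeeping recorded in Remark~\ref{optimal}: since $SD_1$ localizes (by \cite[\S 6(ii)]{HVV}), conditions (a) and (b) together force $\depth \homology_i(I)_\p \ge i$ at every prime $\p$ in the support of $\homology_i(I)$, and hence $\homology_i(I)$ satisfies Serre's condition $S_2$ as an $R/I$-module for every $i\ge 1$. The standing hypothesis that $\homology_0(I)=R/I$ satisfies $S_2$ then covers the remaining index, so $\homology_i(I)$ satisfies $S_2$ for all $i=0,\dots,\ell-g$; this is precisely hypothesis (c) of Theorem~\ref{thm:SDh}.

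Having verified hypotheses (a), (b), and (c) of Theorem~\ref{thm:SDh} for $j=0$, I would simply apply that theorem to conclude that $I$ is strongly Cohen-Macaulay.

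The only step with any content is the second one, namely the observation from Remark~\ref{optimal} that the sliding-depth bound, once localized, already delivers $S_2$ for all the higher Koszul homology modules, so that the single hypothesis on $R/I$ suffices to recover hypothesis (c). I do not anticipate any real obstacle, since the substantive spectral-sequence analysis is entirely contained in the proof of Theorem~\ref{thm:SDh}; the corollary is essentially a repackaging.
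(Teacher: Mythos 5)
Your proposal is correct and is exactly the paper's route: Remark~\ref{optimal} is precisely the observation that, for $j=0$, the localized $SD_1$ bound together with condition (a) yields $S_2$ for all $\homology_i(I)$ with $i\ge 1$, so only the $i=0$ case needs to be assumed and Theorem~\ref{thm:SDh} applies. One small slip in your write-up: the localized sliding-depth computation gives $\depth \homology_i(I)_\p \ge \min\{\dim (R/I)_\p,\ i+1\}$, not merely $\ge i$, and it is the $i+1$ that delivers $S_2$ for every $i\ge 1$.
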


\begin{cor}
\label{SDhj=1}
Let $R$ be a local Gorenstein ring, and let $I$ be an ideal of $R$ 
such that $R/I$ satisfies Serre's condition $S_2$. Suppose the following conditions hold
\begin{itemize}
\item[(a)] $\mu(I_{\p}) \leq \height{\p}+1$ for all prime ideals $\p \supseteq I$
and
\item[(b)] $I$ satisfies $SD_1$.
\item[(c)] $\homology_1(I)$ satisfies Serre's condition $S_2$ as an $R/I$-module.
\end{itemize}
\noindent Then $I$ is strongly Cohen-Macaulay.
\end{cor}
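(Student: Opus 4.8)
The plan is to obtain this corollary as the special case $j = 1$ of Theorem~\ref{thm:SDh}, with the standing hypothesis on $R/I$ and the extra hypothesis (c) on $\homology_1(I)$ supplying exactly what is needed to verify condition (c) of that theorem.

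First I would record the arithmetic of the specialization. For $j = 1$ one has $h = \lceil \frac{j+1}{2} \rceil = 1$, so condition (b) of Theorem~\ref{thm:SDh} reads ``$I$ satisfies $SD_1$'', which is condition (b) of the corollary, and condition (a) of Theorem~\ref{thm:SDh}, namely $\mu(I_\p) \le \height{\p} + j$, becomes $\mu(I_\p) \le \height{\p} + 1$, which is condition (a) of the corollary. Thus the only remaining task is to check condition (c) of Theorem~\ref{thm:SDh}: that $\homology_i(I)$ satisfies $S_2$ as an $R/I$-module for every $i = 0, \dots, \ell - g$.

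I would split this check according to the index $i$. For $i \ge 2$ the $S_2$ condition is automatic from (a) and (b): by the computation recorded in Remark~\ref{optimal}, since $SD_1$ localizes and $\mu(I_\p) \le \height{\p} + 1$, one gets $\depth \homology_i(I_\p) \ge \dim R_\p - \mu(I_\p) + i + h \ge -1 + i + 1 = i$ at every prime $\p \supseteq I$, which forces $S_2$ once $i \ge 2$ (the general threshold $\lfloor \frac{j+1}{2} \rfloor + 1 = 2$ from that remark). It remains only to treat $i = 0$ and $i = 1$. But $\homology_0(I) = R/I$, which satisfies $S_2$ by the standing hypothesis of the corollary, and $\homology_1(I)$ satisfies $S_2$ by hypothesis (c). Hence all three hypotheses of Theorem~\ref{thm:SDh} hold with $j = 1$, and that theorem yields that $I$ is strongly Cohen-Macaulay.

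There is no genuine obstacle here: the statement is a bookkeeping specialization of Theorem~\ref{thm:SDh}. The one point that deserves a moment's care is the count in Remark~\ref{optimal} showing that, for $j = 1$, sliding depth together with the bound on the number of local generators already forces $S_2$ on $\homology_i(I)$ for all $i \ge 2$, so that the hypotheses on $R/I$ and on $\homology_1(I)$ suffice to cover precisely the two exceptional indices $i = 0$ and $i = 1$.
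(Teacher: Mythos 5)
Your proof is correct and follows exactly the paper's own route: the corollary is obtained as the case $j=1$ of Theorem~\ref{thm:SDh}, with Remark~\ref{optimal} showing that conditions (a) and (b) already force $S_2$ on $\homology_i(I)$ for all $i\geq \lfloor\frac{j+1}{2}\rfloor+1=2$, so that only the indices $i=0$ and $i=1$ require the explicit hypotheses on $R/I$ and $\homology_1(I)$. Your arithmetic ($h=1$, threshold $2$) matches the paper's.
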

We first give a quick proof of Theorem~\ref{thm:SDh} using the
spectral sequence from Construction~\ref{rmk:spectralsequence}. Since
there is an elementary proof modeled on that of \cite[Theorem~1.4]{HVV} 
that avoids spectral sequences, we also include it afterwards.

\begin{prf*}
Denote by $\homology_i$ the homology modules $\homology_i(I)$.
First note that if $\dim R/I \leq 2$ then the assertion follows
trivially from condition (c). Assume that $\dim R/I \geq 3$. Let
$\ell$ be the minimal number of generators of $I$ and $g$ be the
grade of $I$; by adding a set of indeterminates to $R$ and to $I$
one may assume that $g > \ell-g+h+1$.  Furthermore, since the
conditions on $I$ localize, one may assume by induction that $I$ is strongly
Cohen-Macaulay on the punctured spectrum of $R$. 
Consider the spectral sequence
\begin{align*}
E_{2}^{p,q}=\Ext{q}{\homology_p}{R} \Rightarrow \homology_{\ell-(p+q)}
\end{align*}
from Construction~\ref{rmk:spectralsequence}.  As before, one has that
$E_{2}^{p,q}=0$ for $q<g$.

We use induction on $i$ to prove that $\homology_{i}$ and
$\homology_{\ell-g-i}$ are Cohen-Macaulay. Note that for all $i\leq h$, by
assumption (b), the module $\homology_{\ell-g-i}$ is Cohen-Macaulay and
hence $\homology_{i}$ is also Cohen-Macaulay as one has $\homology_{i}(I) \cong
\Ext{g}{\homology_{\ell-g-i}(I)}{R}$ by 
Proposition~\ref{prp:S2}. We use induction to prove that
$\homology_i$ is Cohen-Macaulay for the remaining range $h < i <
\ell-g-h$, considering the base cases ($i\leq h$) done.  For the
inductive step, we suppose that $\homology_{p}$ and
$\homology_{\ell-g-p}$ are Cohen-Macaulay for all $p< i$ and we
show that $\homology_{i}$ and $\homology_{\ell-g-i}$ are
Cohen-Macaulay.  The sliding depth hypothesis $SD_h$ yields
inequalities
\begin{align*}
  \depth \homology_{i} \geq d - \ell + i + h, \quad \text{and} \quad \depth
  \homology_{\ell-g-i} \geq d-\ell + (\ell - g- i) + h.
\end{align*}
If they are not Cohen-Macaulay, we improve the inequality for $\depth
\homology_{\ell-g-i}$ by one in order to be able to apply Remark~\ref{rmk:HO}. 

The inductive hypothesis implies that $E_{2}^{p,q}=0$ whenever $q>g$
and either $p< i$ or $p> \ell-g-i$. In addition, by
Proposition~\ref{prp:S2} and by Lemma~\ref{arrows} the differentials
emanating from row $g$ on any page of the spectral sequence are zero
maps.  Therefore, all differentials landing in the term
$E_{r}^{p_0,q_0}$ with $p_0=\ell-g-i$ and $q_0=d-(d-g-i+h)=g+i-h$ are
zero.

Moreover, the hypothesis of $SD_h$ implies that $E_{r}^{p,q}=0$ for
$q>d-(d-g-p+h)$, that is, they vanish above the line though
$E_{r}^{p_0,q_0}$ with a slope of $-1$. Therefore, all differentials
emanating from $E_{r}^{p_0,q_0}$ are zero as well, and so one sees that
$E_{2}^{p_0,q_0} = E_{3}^{p_0,q_0} = \cdots = E_{\infty}^{p_0,q_0}$. But
Proposition~\ref{prp:S2} and Lemma~\ref{arrows} yield that
$E_{\infty}^{p_0,q_0}=0$; therefore one gets that $E_{2}^{p_0,q_0}=0$,
which yields the improved depth inequality $ \depth
\homology_{\ell-g-i-1} \geq d-\ell + (\ell - g- i) + h +1$.

In conclusion, one has 
\begin{align*}
\depth \homology_{\ell-g-i} + \depth \homology_{i}   &\geq    d-\ell + (\ell - g- i) + h +1 + d - \ell + i + h \\
&\geq (d-g) + (d+j-\ell) +2 \geq \dim R/I +2,
\end{align*}
where the last inequality follows from the fact that $d+j-\ell \geq 0$: 
Indeed condition (a) at $\p=\m$ yields the inequality $\ell \leq d+j$; 
note that even if $j=-1$ this inequality would hold as $\dim R/I \ge 3$.

Together with the earlier assumption that $I$ is strongly
Cohen-Macaulay on the punctured spectrum, this inequality now implies
that $\homology_{i}$ and $\homology_{\ell-g-i}$ are Cohen-Macaulay
as desired by the result \cite[Lemma~5.8]{Hu} or \cite{HO}, as described in Remark~\ref{rmk:HO},  
applied to the ring $S=R/({\bf x})$ 
where ${\bf x}$ is an $R$-regular sequence of length $g$ in the ideal $({\bf y})$,  
using the fact that the supports of $R/({\bf y})$ and $R/({\bf x})$ are the same.
\end{prf*}

The following is a proof of Theorem~\ref{thm:SDh} that avoids spectral sequences.

\begin{prf*}[Alternate Proof of Theorem~\ref{thm:SDh}]
  Let $\ell$ be the minimal number of generators of $I$ and $g$ be the
  grade of $I$.  As in the other proof, we may assume that one has
  $\dim R/I \geq 3$ and $g > \ell-g+h+1$ and that $I$ is strongly
  Cohen-Macaulay on the punctured spectrum of $R$.  
  
  Set $\homology_i = \homology_i(I)$ for all $i$.  Since $\homology_i$
  satisfies Serre's condition $S_2$ as an $R/I$-module for all $i$,
  there is an isomorphism $\homology_i\cong \Ext{g}{\homology_{\ell
      -g-i}}{R}$ by Proposition~\ref{prp:S2}. Next note by condition
  (b) that $\homology_{\ell-g-i}$, and hence $\homology_{i}$ by
  Proposition~\ref{prp:S2}, are Cohen-Macaulay for $i=0, \dots h$. In
  particular, one sees that $R/I$ is Cohen-Macaulay. In order to
  prove that $\homology_{i}$ and $\homology_{\ell-g-i}$ are
  Cohen-Macaulay for the remaining range $h<i<\ell-g-h$ we prove the
  inequality
\begin{align}
\label{HOinequality}
\depth \homology_{\ell-g-i} + \depth \homology_{i}   &\geq  \dim R/I +2
\end{align}
and the desired conclusion follows from Remark~\ref{rmk:HO} as in the other proof.

Since $I$ satisfies $SD_h$ one has inequalities
\begin{equation}
    \label{depthsum}
    \begin{split}
    \begin{aligned}
\depth \homology_{\ell-g-i} + \depth \homology_{i}   &\geq    d-\ell + (\ell - g- i) + h + d - \ell + i + h \\
&\geq (d-g) + (d+j-\ell) +1.
\end{aligned}
    \end{split}
  \end{equation}
Notice, as in the previous proof, that condition (a) at $\p=\m$ yields the inequality 
$\ell \leq d+j$ and so gives that $d+j-\ell \geq 0$.

If, in fact, $d+j-\ell \geq 1$, then for all $i$ one has that
\begin{align*}
  \depth \homology_{\ell-g-i} + \depth \homology_{i} &\geq d-\ell +2
  \geq d-g+2 = \dim R/I +2
\end{align*}
and hence $\homology_{i}$ is Cohen-Macaulay for all $i$.

If $d+j-\ell =0$, we use induction on $i$ to prove that
$\homology_{i}$ (equivalently, $\homology_{\ell-g-i}$) is
Cohen-Macaulay for all $i$, or, equivalently, that the
inequality~\ref{HOinequality} holds.  For $i=0$ the conclusion is
obvious as $R/I$ is Cohen-Macaulay. Note further that $\depth
Z_{\ell-g+1} \geq d-g+2$ since $Z_{\ell-g+1} \cong B_{\ell-g+1}$ and
$B_{\ell-g+1}$ has projective dimension at most $g-2$ as 
$\homology_{i}=0$ for all $i\geq \ell-g+1$.

For the inductive step, we prove a seemingly stronger statement. We
assume that $\homology_{i}$ is Cohen-Macaulay and that $\depth
Z_{\ell-g-(i-1)} \geq d-g+2$ and prove that $\homology_{i+1}$ is
Cohen-Macaulay and that $\depth Z_{\ell-g-i} \geq d-g+2$.

We use the following exact sequences from Remark~\ref{depthZ}
\begin{align}
\label{ZKB}
0 \to Z_{t+1} \to K_{t+1} \to B_{t} \to 0
\end{align}
\begin{align}
\label{BZH}
0 \to B_{t} \to Z_{t} \to \homology_{t} \to 0.
\end{align}

From the exact sequence (\ref{ZKB}) with $t=\ell-g-i$, depth-counting
yields that $\depth B_{\ell-g-i} \geq d-g+1$ from the inductive
hypothesis.  Thus the exact sequence (\ref{BZH}) with $t=\ell-g-i$
gives that
\begin{align*}
  \depth Z_{\ell-g-i} \geq \min \{ \depth B_{\ell-g-i}, \depth
  \homology_{\ell-g-i} \} = d-g,
\end{align*}
where equality holds since $\homology_{\ell-g-i}$ is Cohen-Macaulay.

The exact sequence (\ref{BZH}) yields the long exact sequence
\begin{equation}
    \label{BZHles}
    \begin{split}
      \begin{aligned}
0=&\Ext{g-1}{\homology_{\ell-g-i}}{R}  \to \Ext{g-1}{Z_{\ell-g-i}}{R} \to \Ext{g-1}{B_{\ell-g-i}}{R} \to\\
& \Ext{g}{\homology_{\ell-g-i}}{R} \to \Ext{g}{Z_{\ell-g-i}}{R} \to
\Ext{g}{B_{\ell-g-i}}{R}=0,
\end{aligned}
    \end{split}
  \end{equation}
where the last equality comes from the fact that $\depth B_{\ell-g-i}
\geq d-g+1$.  The exact sequences (\ref{ZKB}) and
(\ref{BZH}), together with the fact that $\Ext{j}{\homology_t}{R}=0$
for all $j<g$, yield a series of isomorphisms (note $g-1-i>0$ for
$i=0, \dots, \ell-g$)
\begin{align*}
\Ext{g-1}{B_{\ell-g-i}}{R} & \cong
\Ext{g-2}{Z_{\ell-g-i+1}}{R}  \\ &\cong
\Ext{g-2}{B_{\ell-g-i+1}}{R} \cong \cdots \   \cong
\Ext{g-1 -i}{B_{\ell-g}}{R}.
\end{align*}
One can see that the last Ext module is isomorphic to $\homology_{i}$
by noting that the tail of the Koszul complex is a free resolution of
$B_{\ell -g}$. Furthermore, the term $\Ext{g}{\homology_{\ell-g-i}}{R}$ is also isomorphic to
$\homology_{i}$. The exact sequence (\ref{BZHles}) then becomes
\begin{align*}
0 \to &\Ext{g-1}{Z_{\ell-g-i}}{R} \to \homology_{i} \xrightarrow{\psi} \homology_{i} \to \Ext{g}{Z_{\ell-g-i}}{R} \to 0.
\end{align*}
As the module $Z_{\ell-g-i}$ is a second syzygy, say of $W$, one has
that $\Ext{g}{Z_{\ell-g-i}}{R} \cong \Ext{g+2}{W}{R}$. But for primes
$\p$ of height $g$ or $g+1$ the module $\Ext{g+2}{W}{R}_\p$ vanishes
as $R_\p$ has injective dimension at most $g+1$.  So $\psi_\p$ is
surjective, hence an isomorphism for such primes; in particular,
$\psi$ is an isomorphism in codimension 1 over $R/I$. Since
$\homology_{i}$ satisfies $S_2$ as an $R/I$-module, this implies that $\psi$ is an
isomorphism. Therefore, $\Ext{g-1}{Z_{\ell-g-i}}{R}$ and
$\Ext{g}{Z_{\ell-g-i}}{R}$ vanish, yielding $\depth Z_{\ell-g-i} \geq
d-g+2$.

By the exact sequence (\ref{ZKB}) for $t=\ell-g-i-1$ a depth count
yields that $\depth B_{\ell-g-i-1} \geq d-g+1$. Similarly, the exact
sequence (\ref{BZH}) for $t=\ell-g-i-1$ yields
\begin{align*}
\depth \homology_{\ell-g-i-1} \geq \min \{ \depth B_{\ell-g-i-1}-1  , \depth Z_{\ell-g-i-1}  \} \geq d-g-i+h
\end{align*}
where the last inequality is due to Remark (\ref{depthZ}) and the fact that $i>h$.  Thus, the inequality (\ref{HOinequality}) holds with $i$ replaced by  $i+1$, completing the inductive step.
\end{prf*}

\section{Low dimension}
\label{sec:SD}

For low dimensional ideals, some surprising relationships between the Koszul homologies
fall out of the spectral sequence, as was also noted by Chardin in \cite{Ch}.

\begin{prp}
\label{prp:dim2}
Let $R$ be a local Gorenstein ring and let $I$ be an ideal such that
$\dim R/I =2$.  Assume that the Koszul homology module
$\homology_{\ell-g-i-1}(I)$ is Cohen-Macaulay for some $i$.  
If either $\homology_i(I)$ or $\homology_{\ell-g-i-2}(I)$ has positive depth, 
then $\homology_i(I)$ is Cohen-Macaulay.
\end{prp}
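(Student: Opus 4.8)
The plan is to feed $\homology_i := \homology_i(I)$ through the spectral sequence
$E_2^{p,q} = \Ext{q}{\homology_p}{R} \Rightarrow \homology_{\ell-(p+q)}$
of Construction~\ref{rmk:spectralsequence}, as in the proofs of Theorem~\ref{thm:dimR/2} and Lemma~\ref{arrows}, exploiting that the hypothesis $\dim R/I = 2$ collapses the sheet to three rows. Write $d = \dim R$, so $g = d-2$, and assume $\homology_i \neq 0$ (otherwise there is nothing to prove). Each nonzero $\homology_p$ is annihilated by $I$ and has dimension $2$, so over the Gorenstein ring $R$ one has $E_2^{p,q} = \Ext{q}{\homology_p}{R} = 0$ unless $g \le q \le d = g+2$. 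Hence only the rows $q = g,\,g+1,\,g+2$ survive, forcing $E_3 = E_\infty$ and leaving as the only differentials the maps $d_2^{p,g}\colon E_2^{p,g} \to E_2^{p-1,g+2}$.

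Next I would identify the three terms $E_\infty^{p,q}$ with $p+q = \ell-i$ that contribute to $\homology_i$: they sit at $(p,q) = (\ell-g-i,\,g)$, $(\ell-g-i-1,\,g+1)$ and $(\ell-g-i-2,\,g+2)$. Since $\homology_{\ell-g-i-1}$ is Cohen-Macaulay of codimension $g$, one has $\Ext{q}{\homology_{\ell-g-i-1}}{R} = 0$ for all $q \neq g$; this kills the middle term, $E_\infty^{\ell-g-i-1,g+1} = E_2^{\ell-g-i-1,g+1} = 0$, and it annihilates the target $E_2^{\ell-g-i-1,g+2}$ of $d_2^{\ell-g-i,g}$, so that $E_\infty^{\ell-g-i,g} = E_2^{\ell-g-i,g} = \Ext{g}{\homology_{\ell-g-i}}{R}$. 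The third term $N := E_\infty^{\ell-g-i-2,g+2}$ is a quotient of $\Ext{g+2}{\homology_{\ell-g-i-2}}{R} = \Ext{d}{\homology_{\ell-g-i-2}}{R}$, which localizes to $0$ at every non-maximal prime (the local ring there has injective dimension $< d$) and so has finite length. By the filtration analysis in the proof of Lemma~\ref{arrows}, the edge map $\homology_i \onto E_\infty^{\ell-g-i,g}$ is surjective with kernel an extension of the terms $E_\infty^{p,q}$ having $p+q = \ell-i$ and $q > g$; since the middle one vanishes, this gives a short exact sequence
\[
0 \to N \to \homology_i \to \Ext{g}{\homology_{\ell-g-i}}{R} \to 0
\]
with $N$ of finite length.

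It remains to show $N = 0$, which is where the positive-depth hypothesis enters. If $\depth \homology_{\ell-g-i-2} > 0$, choose $x \in \m$ a nonzerodivisor on $\homology_{\ell-g-i-2}$; applying $\Ext{\bullet}{-}{R}$ to $0 \to \homology_{\ell-g-i-2} \xrightarrow{x} \homology_{\ell-g-i-2} \to \homology_{\ell-g-i-2}/x\homology_{\ell-g-i-2} \to 0$ and using that $R$ has injective dimension $d$ shows $x$ acts surjectively on the finitely generated module $\Ext{d}{\homology_{\ell-g-i-2}}{R}$, which therefore vanishes, so $N = 0$. If instead $\depth \homology_i > 0$, then the finite-length submodule $N$ of $\homology_i$ lies in $H^0_\m(\homology_i) = 0$, so again $N = 0$. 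In both cases $\homology_i \cong \Ext{g}{\homology_{\ell-g-i}}{R}$, and by Remark~\ref{rmk:ExtgHom} the latter is $\Hom{\homology_{\ell-g-i}}{R/({\bf x})}$ for an $R$-regular sequence ${\bf x}$ of length $g$ in $I$; since $R/({\bf x})$ is a two-dimensional Cohen-Macaulay ring, such a Hom-module is maximal Cohen-Macaulay over it (cf.\ Remark~\ref{rmk:equivalenttoS2}), hence Cohen-Macaulay, and therefore so is $\homology_i$.

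The part I expect to be delicate is the spectral-sequence bookkeeping of the second paragraph: confirming that with three rows the edge map $\homology_i \onto E_\infty^{\ell-g-i,g}$ is exactly the one from Lemma~\ref{arrows} and that its kernel is precisely the surviving top-row term $N$, the middle-row term having been killed by the Cohen-Macaulayness of $\homology_{\ell-g-i-1}$. Once the shape of the sheet is pinned down, the $\Ext$-vanishings and the two short depth arguments are routine.
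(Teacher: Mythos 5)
Your argument is correct, but it runs through the spectral sequence along a different axis than the paper's proof. The paper fixes the \emph{column} $p=i$ and shows the vanishing of the $E_2$-terms $\Ext{g+1}{\homology_i(I)}{R}$ and $\Ext{d}{\homology_i(I)}{R}$: the first via Proposition~\ref{prp:S2} and Lemma~\ref{arrows} applied to the index $\ell-g-i-1$ (giving $E_\infty^{i,g+1}=0$, hence $\depth\homology_i(I)\neq 1$), the second by identifying $E_2^{i,d}=E_\infty^{i,d}$ with a finite-length submodule of the abutment $\homology_{\ell-g-i-2}(I)$. You instead fix the \emph{anti-diagonal} $p+q=\ell-i$ converging to $\homology_i(I)$ and read off the filtration, obtaining the exact sequence
\begin{equation*}
0 \to N \to \homology_i(I) \to \Ext{g}{\homology_{\ell-g-i}(I)}{R} \to 0
\end{equation*}
with $N$ of finite length; the Cohen--Macaulayness of $\homology_{\ell-g-i-1}(I)$ enters only through the $E_2$-vanishings $\Ext{g+1}{\homology_{\ell-g-i-1}(I)}{R}=\Ext{g+2}{\homology_{\ell-g-i-1}(I)}{R}=0$, so you never need Proposition~\ref{prp:S2} or Lemma~\ref{arrows}, and your argument is in the spirit of the filtration analysis in the proof of Theorem~\ref{thm:genduality}. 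Your bookkeeping checks out: the row-$g$ term is indeed the top quotient of the filtration (consistent with the two-row exact sequence in the proof of Theorem~\ref{thm:genduality}), the only differentials are $d_2^{p,g}$, and $N$ is a quotient of $\Ext{d}{\homology_{\ell-g-i-2}(I)}{R}$. What your route buys is a uniform treatment of the two alternative hypotheses (each simply forces $N=0$, one by killing $\Ext{d}{\homology_{\ell-g-i-2}(I)}{R}$ outright, the other because a positive-depth module has no finite-length submodule) plus the explicit by-product $\homology_i(I)\cong\Ext{g}{\homology_{\ell-g-i}(I)}{R}$, from which Cohen--Macaulayness follows since this module is $\PHom_{R/({\bf x})}(\homology_{\ell-g-i}(I),R/({\bf x}))$ over the two-dimensional Cohen--Macaulay ring $R/({\bf x})$. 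The paper's route is marginally shorter because it converts the two $\Ext$-vanishings directly into the depth statements $\depth\homology_i(I)\neq 1$ and $\neq 0$ without needing the final duality identification. Both proofs are sound.
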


\begin{prf*}
Consider the spectral sequence from
Construction~\ref{rmk:spectralsequence}.  
Set $\homology_p = \homology_p(I)$ for all $p$. 
Since $\dim R/I=2=d-g$, the second
page $E_2$ has at most 3 nonzero rows, namely rows $g$,
$g+1$, and $g+2$. 
Thus, one has $E^{i,g+1}_2=E^{i,g+1}_{\infty}$. 
Since
$\homology_{\ell-g-i-1}$ is Cohen-Macaulay, then the edge map 
$\homology_{\ell-g-i-1} \to \Ext{g}{\homology_{i+1}}{R}$ 
is an isomorphism by Proposition~\ref{prp:S2}. 
One then has that 
$\Ext{d-1}{\homology_i}{R} = E^{i,g+1}_2 = E^{i,g+1}_{\infty} = 0$, 
where the last equality is by Lemma~\ref{arrows}. 
This implies that $\depth \homology_i \not = 1$. 
Therefore, if $\homology_i(I)$ has positive depth, then $\homology_i$ is Cohen-Macaulay.

Moreover, Lemma~\ref{arrows} also yields that the differential $d_2^{i+1,g}$ is zero.
Thus, the equality $ E^{i,d}_2= E^{i,d}_{\infty}$ holds. Therefore, $\Ext{g+2}{\homology_i}{R} = E^{i,d}_2$
is isomorphic to a submodule of $\homology_{\ell-g-i-2}$. Since  $R$ is Gorenstein and $g+2=d$, 
the module $\Ext{g+2}{\homology_i}{R}$ has  finite length. If $\homology_{\ell-g-i-2}$  has positive 
depth then $\Ext{g+2}{\homology_i}{R}$ vanishes and so  $\homology_i(I)$ is Cohen-Macaulay.
\end{prf*}

Similar results can be deduced in other low dimensions. As an example, consider the following result. 

\begin{prp}
\label{prp:dim3}
Let $R$ be a local Gorenstein ring and let $I$ be an ideal such that
$\dim R/I =3$.  Assume that the Koszul homology modules
$\homology_{\ell-g-i-1}(I)$ and $\homology_{\ell-g-i-2}(I)$ satisfy Serre's $S_2$ condition for some $i$.  
If both $\homology_i(I)$ and $\homology_{i-1}(I)$ have positive depth, 
then $\homology_i(I)$ is Cohen-Macaulay.
\end{prp}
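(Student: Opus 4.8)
The plan is to imitate the proof of Proposition~\ref{prp:dim2}, carried one row deeper into the spectral sequence. Set $\homology_p=\homology_p(I)$ and $d=\dim R$; since $\dim R/I=3=d-g$, the spectral sequence $E_2^{p,q}=\Ext{q}{\homology_p}{R}\Rightarrow\homology_{\ell-(p+q)}$ of Construction~\ref{rmk:spectralsequence} has exactly the four possibly nonzero rows $q=g,\,g+1,\,g+2,\,g+3$, the last being $q=d$. We may assume $\homology_i\neq 0$; then $\dim\homology_i=\dim R/I=3$, and it suffices to prove that $\Ext{q}{\homology_i}{R}=0$ for every $q>g$, since over the Gorenstein local ring $R$ this is equivalent to $\depth\homology_i\geq d-g=3$ and hence to $\homology_i$ being Cohen-Macaulay. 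The relevant Ext modules occupy column $p=i$ at $q=g+1,g+2,g+3$; as $E_2$-terms they lie on the anti-diagonals $p+q=g+i+1,\,g+i+2,\,g+i+3$, which contribute to $\homology_{\ell-g-i-1}$, $\homology_{\ell-g-i-2}$, $\homology_{\ell-g-i-3}$ respectively.

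The top one is immediate: $\Ext{g+3}{\homology_i}{R}=\Ext{d}{\homology_i}{R}=0$ because $\depth\homology_i>0$. For the other two I would feed the $S_2$ hypotheses into Proposition~\ref{prp:S2} and Lemma~\ref{arrows}. Since $\homology_{\ell-g-i-1}$ satisfies $S_2$ over $R/I$, Proposition~\ref{prp:S2} makes the edge homomorphism $\homology_{\ell-g-i-1}\to\Ext{g}{\homology_{i+1}}{R}$ an isomorphism, so Lemma~\ref{arrows} with fixed index $\ell-g-i-1$ gives both that $d_r^{i+1,g}=0$ for all $r\geq 2$ and that $E_\infty^{p,q}=0$ whenever $q>g$ and $p+q=g+i+1$; in particular $E_\infty^{i,g+1}=0$. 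Likewise, since $\homology_{\ell-g-i-2}$ satisfies $S_2$, the same combination with fixed index $\ell-g-i-2$ gives $E_\infty^{p,q}=0$ whenever $q>g$ and $p+q=g+i+2$, so $E_\infty^{i,g+2}=0$.

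It then remains to check that in column $i$ the $E_2$-terms at $q=g+1$ and $q=g+2$ already coincide with their $E_\infty$-terms, i.e.\ that no nonzero differential enters or leaves them on any page. At $(i,g+1)$: an outgoing differential $d_r$ lands in row $g+1+r$, which lies above row $d$ as soon as $r\geq 3$, so the only candidate is $d_2^{i,g+1}\colon E_2^{i,g+1}\to E_2^{i-1,g+3}=\Ext{g+3}{\homology_{i-1}}{R}$, and this target vanishes since $g+3=d$ and $\depth\homology_{i-1}>0$; an incoming differential would originate in a row below $g$ and so is zero. Hence $E_2^{i,g+1}=E_\infty^{i,g+1}=0$, that is, $\Ext{g+1}{\homology_i}{R}=0$. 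At $(i,g+2)$: outgoing differentials land in row $\geq g+4>d$ and vanish; the only incoming candidate is $d_2^{i+1,g}\colon E_2^{i+1,g}\to E_2^{i,g+2}$, which is zero by the vanishing of $d_r^{i+1,g}$ recorded above, while deeper incoming differentials originate below row $g$. Hence $E_2^{i,g+2}=E_\infty^{i,g+2}=0$, that is, $\Ext{g+2}{\homology_i}{R}=0$. Together with the first vanishing this shows $\Ext{q}{\homology_i}{R}=0$ for all $q>g$, so $\homology_i$ is Cohen-Macaulay.

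No step here is genuinely hard; the only thing demanding care is the bookkeeping of differentials across the four rows, and keeping straight which of the four hypotheses does what: the two $S_2$ conditions feed Lemma~\ref{arrows} to kill $E_\infty^{i,g+1}$, $E_\infty^{i,g+2}$ and the stray differential $d_2^{i+1,g}$; the positive depth of $\homology_{i-1}$ is exactly what annihilates the one possibly nonzero differential leaving $E_2^{i,g+1}$; and the positive depth of $\homology_i$ both disposes of row $g+3$ and turns the resulting Ext-vanishing into Cohen-Macaulayness. Finally, one should note that all the assertions are vacuous when $\homology_i=0$ or when some auxiliary homology module falls outside the range $0,\dots,\ell-g$, so no boundary cases cause trouble.
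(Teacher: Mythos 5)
Your proof is correct and follows essentially the same route as the paper's: the same four-row spectral sequence, with Proposition~\pgref{prp:S2} and Lemma~\pgref{arrows} killing $E_\infty^{i,g+1}$, $E_\infty^{i,g+2}$ and the differential $d_2^{i+1,g}$, the positive depth of $\homology_{i-1}(I)$ killing the target $E_2^{i-1,g+3}$ of the one remaining differential, and the positive depth of $\homology_i(I)$ handling row $g+3$ and finishing the depth count. The only difference is cosmetic: you phrase the conclusion as vanishing of all $\Ext{q}{\homology_i(I)}{R}$ for $q>g$, whereas the paper rules out $\depth\homology_i(I)=2,1,0$ in turn.
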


\begin{prf*}
Consider the spectral sequence from
Construction~\ref{rmk:spectralsequence}.  
Set $\homology_p = \homology_p(I)$ for all $p$. 
Since $\dim R/I=3=d-g$, the second
page $E_2$ has at most 4 nonzero rows, namely rows $g$,
$g+1$, $g+2$, and $g+3$. 
Therefore, the only possible nonzero differential coming in or out of $E^{i,g+1}_r$ on any page 
is the one to $E^{i-1,g+3}_2$ on page 2, but the fact that $\homology_{i-1}(I)$ 
has positive depth implies that $E^{i-1,g+3}_2=0$. 
Therefore, one has that $E^{i,g+1}_2 = E^{i,g+1}_\infty$. 
But since the edge map $\homology_{\ell-g-i-1} \to \Ext{g}{\homology_{i+1}}{R}$ is an 
isomorphism by Proposition~\ref{prp:S2},  
Lemma~\ref{arrows} implies that $E^{i,g+1}_\infty$ vanishes.  
Therefore, $\depth \homology_i \not = 2$. 
Lemma~\ref{arrows} also yields that the differential $d_2^{i+1,g}$ is zero. 
This being the only possible nonzero differential coming in or out of $E^{i,g+2}_2$, 
one gets that $E^{i,g+2}_2 = E^{i,g+2}_\infty$. 
However, since the edge map $\homology_{\ell-g-i-2} \to \Ext{g}{\homology_{i+2}}{R}$ is an 
isomorphism by Proposition~\ref{prp:S2},  
Lemma~\ref{arrows} implies that $E^{i,g+2}_\infty$ vanishes.  
Therefore, $\depth \homology_i \not = 1$. 
Since $\depth \homology_i \not = 0$ by hypothesis, 
$\homology_i(I)$ must be Cohen-Macaulay.
\end{prf*}

\section*{Acknowledgments}

\noindent The authors would like to thank Craig Huneke for
useful conversations regarding this paper, including comments 
which simplified the proof of Theorem~\ref{thm:genduality}.

\bibliographystyle{amsplain} %


\providecommand{\arxiv}[2][AC]{\mbox{\href{http://arxiv.org/abs/#2}{\sf
      arXiv:#2 [math.#1]}}}
\providecommand{\oldarxiv}[2][AC]{\mbox{\href{http://arxiv.org/abs/math/#2}{%
      \sf arXiv:math/#2
      [math.#1]}}}\providecommand{\MR}[1]{\mbox{\href{http://www.ams.org/mathscine%
      t-getitem?mr=#1}{#1}}}
\renewcommand{\MR}[1]{\mbox{\href{http://www.ams.org/mathscinet-getitem?mr=#%
      1}{#1}}} \providecommand{\bysame}{\leavevmode\hbox
  to3em{\hrulefill}\thinspace}
\providecommand{\MR}{\relax\ifhmode\unskip\space\fi MR }
\providecommand{\MRhref}[2]{%
  \href{http://www.ams.org/mathscinet-getitem?mr=#1}{#2} }
\providecommand{\href}[2]{#2}

\end{document}